\DeclareMathOperator{\dist}{dist}
\DeclareMathOperator{\chio}{\chi_o}
\DeclareMathOperator{\chis}{\chi_2}
\DeclareMathOperator{\exponential}{exp}
\newtheorem{theorem}{Theorem}[section]
\newtheorem*{conjecture}{Conjecture}
\newtheorem{lemma}[theorem]{Lemma}
\theoremstyle{definition}
\def\epsilon{\varepsilon}
\title{On Oriented Colourings of Graphs on Surfaces}
\author{Alexander Clow}
\address{Department of Mathematics, Simon Fraser University}
\email{alexander\_clow@sfu.ca}
\date{\today}
\begin{document}

\begin{abstract}
For an oriented graph $G$, the least number of colours required to oriented colour $G$ is called the oriented chromatic number of $G$ and denoted $\chio(G)$.
For a non-negative integer $g$ let $\chio(g)$ be the least integer such that $\chio(G) \leq \chio(g)$ for every oriented graph $G$ with Euler genus at most $g$. 
We will prove that $\chio(g)$ is nearly linear in the sense that $\Omega(\frac{g}{\log(g)}) \leq \chio(g) \leq O(g \log(g))$. This resolves a question of the author, Bradshaw, and Xu, by improving their bounds of the form $\Omega((\frac{g^2}{\log(g)})^{1/3}) \leq \chio(g)$ and $\chio(g) \leq O(g^{6400})$.
\end{abstract}

\maketitle

\section{Introduction}

In this paper we consider \emph{oriented graphs}, which are directed graphs without loops or multi-edges.
If $H$ is an oriented graph and $G$ is the underlying simple graph of $H$, then we say \emph{$H$ is an orientation of $G$}.
If $H$ is an orientation of $G$ and we consider a parameter of $H$ that does not depend on orientation, then suppose we are considering this parameter in $G$. 
For example, let the maximum degree of $H$ be the maximum degree of $G$.

Given oriented graphs $G=(V,E)$ and $H = (V',E')$ we say a map $\phi: V \rightarrow V'$ is an \emph{oriented homomorphism} from $G$ to $H$ if for all $(u,v)\in E$, there exists an edge $(\phi(u),\phi(v))\in E'$. 
See Figure~\ref{Fig: o-colouring example} for an example of an oriented homomorphism.
Given an oriented homomorphism $\phi$ from $G$ to $H$, we say that the partition of $V$ given by $\{\phi^{-1}(z): z \in V'\}$ is an \emph{oriented colouring} of $G$. If $H$ has at most $k$ vertices, then we say $\{\phi^{-1}(z): z \in V'\}$ is an oriented $k$-colouring of $G$, and if $G$ admits an oriented homomorphism to an oriented graph with at most $k$ vertices, then we say $G$ is oriented $k$-colourable. The \emph{oriented chromatic number} of an oriented graph $G$, denoted $\chio(G)$ is the least integer $k$ such that $G$ is oriented $k$-colourable. If $G$ is a simple graph, then the oriented chromatic number of $G$, denoted $\chio(G)$, is the maximum oriented chromatic number of any orientation of $G$.

\begin{figure}[h]
\centering
\scalebox{0.75}{
\begin{tikzpicture}[node distance={15mm}, ultra thick, main/.style = {draw, circle}, minimum size = 0.1cm] 

\node[main][fill= blue] (1) at (0,0) {}; 
    \node[fill=none] at (0,-0.5) (nodes) {$3$};
\node[main][fill= red] (2) at (3,0) {}; 
    \node[fill=none] at (3,-0.5) (nodes) {$1$};
\node[main][fill= cyan] (3) at (6,0) {};
    \node[fill=none] at (6,-0.5) (nodes) {$2$};
\node[main][fill= blue] (4) at (4.5,2.5) {}; 
    \node[fill=none] at (5,2.5) (nodes) {$3$};
\node[main][fill= red] (5) at (3,5) {};
    \node[fill=none] at (3,5.5) (nodes) {$1$};
\node[main][fill= cyan] (6) at (1.5,2.5) {}; 
    \node[fill=none] at (1,2.5) (nodes) {$2$};
\node[main][fill= green] (7) at (3,1.5) {};
    \node[fill=none] at (3.25,2.25) (nodes) {$4$};

\draw [->] (1) -- (2);
\draw [->] (2) -- (3);
\draw [->] (3) -- (4);
\draw [->] (4) -- (5);
\draw [->] (5) -- (6);
\draw [->] (6) -- (1);
\draw [->] (1) -- (7);
\draw [->] (2) -- (7);
\draw [->] (3) -- (7);
\draw [->] (4) -- (7);
\draw [->] (5) -- (7);
\draw [->] (6) -- (7);

\draw[->] (7.5,2) -- (10,2);

\node[main][fill= cyan] (i) at (11.5,0) {}; 
    \node[fill=none] at (11.5,-0.5) (nodes) {$2$};
\node[main][fill= blue] (ii) at (14.5,0) {}; 
    \node[fill=none] at (14.5,-0.5) (nodes) {$3$};
\node[main][fill= red] (iii) at (13,3) {};
    \node[fill=none] at (13,3.5) (nodes) {$1$};
\node[main][fill= green] (iv) at (13,1) {}; 
    \node[fill=none] at (13.5,1.5) (nodes) {$4$};

\draw [->] (i) -- (ii);
\draw [->] (ii) -- (iii);
\draw [->] (iii) -- (i);
\draw [->] (i) -- (iv);
\draw [->] (ii) -- (iv);
\draw [->] (iii) -- (iv);
\end{tikzpicture}}
    \caption{An oriented homomorphism and its associated oriented colouring.}
    \label{Fig: o-colouring example}
\end{figure}
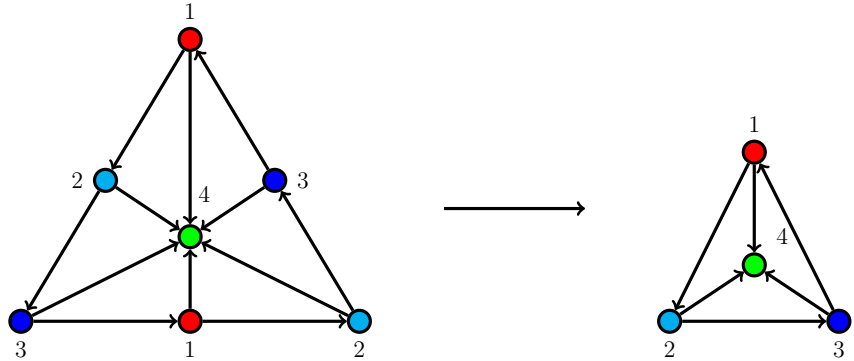

It is often convenient to consider $\chio(G)$ maximised over all graphs $G$ in a given family $\mathcal{G}$. In particular, this paper will study the family of graphs with Euler genus at most $g$. To this end we let $\chio(g)$ be the maximum oriented chromatic number of any graph with Euler genus at most $g$.

Oriented colouring was first introduced by Courcelle \cite{courcelle1994monadic} in the context of second order logic. In the same year Raspaud and Sopena \cite{raspaud1994good} introduced the problem in its current form when they proved that every planar graph has oriented chromatic number at most $80$. 
Equivalently, Raspaud and Sopena showed $\chio(0)\leq 80$.
The main step in Raspaud and Sopena's proof is to show that for all graphs $G$, $\chio(G) \leq \chi_a(G)2^{\chi_a(G)-1}$ where $\chi_a(G)$ is the acyclic chromatic number. 
Given for all planar graphs $\chi_a(G) \leq 5$, see Borodin \cite{borodin1979acyclic}, $\chio(0)\leq 80$ follows immediately.
Meanwhile, the best lower bound for $\chio(0)$ is of the form $\chio(0)\geq 18$  \cite{marshall2015oriented}.
Despite significant effort, see \cite{sopena2016homomorphisms}, the upper bound $\chio(0)\leq 80$ has yet to be improved.

From here determining upper bounds on $\chio(g)$ for $g>0$ was of natural interest.
Using the same approach as \cite{raspaud1994good}, along with a bound on the acyclic chromatic number of graphs on surfaces from Albertson and Berman \cite{albertson1978acyclic},
Kostochka, Sopena, and Zhu \cite{kostochka1997acyclic} observed that $\chio(g) \leq (4g+4)2^{4g+3}$. 
By taking a better bound on the acyclic chromatic number of graphs on surfaces by Alon, Mohar, and Sanders \cite{alon1996acyclic} this can be improved to $\chio(g) \leq 2^{O(g^{4/7})}$.

Using a more general forbidden subgraph colouring method Aravind and Subramanian \cite{aravind2013forbidden} we able to show that for every $\epsilon>0$ there exists a constant $c$ such that $\chio(g) \leq 2^{cg^{(1/2)+\epsilon}}$. 
Aravind and Subramanian \cite{aravind2013forbidden} then conjectured that there exists a constant $C$ such that
$$
\chio(g) \leq 2^{C\sqrt{g}}.
$$
This conjecture is the natural limit of the forbidden subgraph colouring arguments used in \cite{aravind2013forbidden,kostochka1997acyclic}.
This is because all forbidden subgraph colouring are proper colourings, implying that there exists graphs with Euler genus $g$ and forbidden subgraph chromatic number $\Omega(\sqrt{g})$ for any lists of forbidden subgraphs.

By applying an approach using that injective chromatic index, a kind of non-proper edge colouring, the author, Bradshaw, and Xu \cite{bradshaw2023injective} proved Aravind and Subramanian's conjecture. In fact, the author, Bradshaw, and Xu proved the much stronger results that
$$
\Omega((\frac{g^2}{\log(g)})^{1/3}) \leq \chio(g) \leq O(g^{6400}).
$$
As this provides a polynomial upper and lower bound for $\chio(g)$ in terms of $g$, this leads to the natural question posed in \cite{bradshaw2023injective}; what is the least $k$ such that $\chio(g) = O(g^k)$?

Our main result is the following theorem. 
As we are mostly concerned with the asymptotic order of $\chio(g)$ we do not make an effort to optimise the coefficients in our bounds.
If the base of a logarithm is not given, assume it is the natural logarithm. 

\begin{theorem}\label{Thm: Main Thm}
    For all integers $g \geq 11$,
    $$
    \frac{\log(2)(g-1)}{\log(g-1)+\log(\log(2)) - \log(2)} \leq \chio(g) \leq  2^{40} g \log(g).
    $$
\end{theorem}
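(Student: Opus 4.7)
I would approach the two bounds separately.

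For the \textbf{lower bound}, the plan is a probabilistic/counting argument in the spirit of, but tighter than, the one in \cite{bradshaw2023injective}. Fix a parameter $n$ (eventually $n\approx g/\log g$) and let $G_0$ be a triangulation of the Euler-genus-$g$ surface on $n$ vertices, with $m_0=3(n+g-2)$ edges; every oriented subgraph of $G_0$ has Euler genus at most $g$, yielding at least $3^{m_0}$ labeled oriented graphs on $[n]$ of Euler genus at most $g$. On the other hand, any oriented graph $\vec{H}$ on $[n]$ with $\chio(\vec{H})\leq k$ corresponds to a triple (tournament $T$ on $[k]$; vertex-map $\phi\colon[n]\to[k]$; subset of the bichromatic pairs of $\phi$ specifying the edges of $\vec{H}$, with direction forced by $T$), giving an upper bound of the form $2^{\binom{k}{2}}\sum_{\phi}2^{B_\phi}$ where $B_\phi$ is the number of bichromatic pairs. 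A standard MGF computation (using that under uniform $\phi$, the indicators $\mathbf{1}[\phi(i)\neq\phi(j)]$ are negatively associated and each has mean $1-1/k$) gives $\sum_\phi 2^{B_\phi} \leq k^n(2-1/k)^{\binom{n}{2}}$. Comparing the two counts and optimising $n$ produces the stated lower bound.

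The main obstacle in the lower bound is executing the optimisation precisely enough to balance the two counts at $k=\Theta(g/\log g)$ with the specific constants of the claim; this hinges on using the refined factor $(2-1/k)^{\binom{n}{2}}$ rather than the trivial $2^{\binom{n}{2}}$ (which would give only $\Omega(\sqrt{g})$) and on the right choice of $n$, after which the equation $((g-1)\log 2/2)^k \approx 2^{g-1}$ comes out exactly.

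For the \textbf{upper bound} $\chio(g)\leq 2^{40}g\log g$, the plan is to construct a universal target tournament $T$ on $N=2^{40}g\log g$ vertices probabilistically, so that every orientation of every graph of Euler genus at most $g$ admits a homomorphism $\vec{G}\to T$. Choose each arc of $T$ uniformly and independently. For each oriented graph $\vec{G}$ of Euler genus at most $g$, the probability of $\vec{G}\not\to T$ is bounded by a first-moment estimate on the number of valid homomorphisms, using the Euler-formula bound $|E(\vec{G})|\leq 3|V(\vec{G})|+3g$. The key step is to union-bound affordably over all relevant $\vec{G}$: this requires exploiting the structural decomposition of genus-$\leq g$ graphs (skewness $O(g)$: a planar subgraph plus $O(g)$ extra handle-edges extracted from the embedding), together with the Raspaud--Sopena 80-vertex target for the planar part, so that the effective union bound is only over the $3^{O(g)}$ configurations of handle-structures rather than over all labeled oriented graphs.

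The main obstacle on the upper bound is controlling the union bound so that $N=\Theta(g\log g)$ truly suffices; a naive enumeration of labeled oriented graphs is far too costly, and it is the planar-plus-$O(g)$ decomposition paired with the fixed 80-vertex Raspaud--Sopena target that brings the count down to a manageable level. The factor $2^{40}$ arises as slack from this bookkeeping and is not optimised.
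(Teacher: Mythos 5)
Your lower-bound strategy is genuinely different from the paper's, but as sketched it cannot reach $\Omega(g/\log g)$. The paper proves the lower bound by \emph{construction}: it invokes the Kostochka--Luczak--Simonyi--Sopena result (Theorem~2 of \cite{kostochka1997minimum}) that for $n\geq 5$ there exists an oriented clique on $n$ vertices with at most $n\log_2 n$ edges, then applies Euler's formula (Lemma~\ref{Lemma: Euler's Formula Linear Edges}) to bound the Euler genus of this specific graph by $(\log_2 n - 1)n + 1$, and inverts via Lambert~$W$ to solve $g \approx n\log_2 n - n$ for $n\approx g/\log g$. Your counting argument cannot recover this. The obstruction is the $2^{\binom{k}{2}}$ factor enumerating target tournaments: the comparison you set up (after correcting the exponent from $\binom{n}{2}$ to $m_0$, which is necessary since $(2-1/k)^{\binom{n}{2}}$ already swamps $3^{m_0}$) becomes
\[
3^{m_0} \;>\; 2^{\binom{k}{2}}\,k^n\,(2-1/k)^{m_0}, \qquad \text{i.e.}\qquad m_0\log\tfrac{3k}{2k-1} \;>\; \tbinom{k}{2}\log 2 + n\log k.
\]
Since $\log\frac{3k}{2k-1}\to\log(3/2)$ and $m_0=3(n+g-2)$, the left side is at most $\approx 1.2(n+g)$, while the right side already contains $\approx 0.35\,k^2$; this forces $k=O(\sqrt{g})$, regardless of how $n$ is chosen. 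The refined factor $(2-1/k)$ in place of $2$ only saves a per-edge constant and does not change the asymptotics. So your approach cannot beat $\Theta(\sqrt{g})$ --- it is actually weaker than even the earlier bound $\Omega((g^2/\log g)^{1/3})$ from \cite{bradshaw2023injective}. The essential ingredient you are missing is an \emph{explicit sparse oriented clique}; no purely entropic count over the class appears to extract it.

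Your upper-bound strategy is also structurally different and has a serious gap. The paper does not take a random tournament as the universal target. It constructs a target $H_g$ from a $(144g-162,10,N)$-full oriented \emph{multipartite} graph (the $(k,d,N)$-full property being established by the probabilistic method in Lemma~\ref{Lemma: Existence of Full graph}), and proves homomorphism existence by a \emph{discharging} argument on a minimal counterexample (Lemmas~\ref{Lemma: <=3-vertex}--\ref{Lemma: smallest counterexample large max degree} and the proof of Theorem~\ref{Thm: Colouring Upper Bound}). The heart of that proof is a reduction to $2$-dipath colouring: on a genus-$g$ surface the $2$-dipath chromatic number is $O(g)$ after peeling off a bounded-degeneracy core, and the $(k,d,N)$-full structure lets one greedily extend partial colourings into the corresponding part $P_{\psi(v)}$. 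Your plan of union-bounding over ``handle-structures'' using skewness plus the Raspaud--Sopena $80$-vertex planar target is not spelled out, and I do not see how it closes: the extra edges may connect vertices that the planar $80$-vertex target identifies, so they impose constraints that the planar homomorphism has already collapsed, and it is unclear why a fresh random tournament on $\Theta(g\log g)$ vertices would simultaneously host the planar target and accommodate all $3^{O(g)}$ edge-patterns. You would need an argument in place of the paper's discharging/greedy-extension mechanism; as stated the union bound is a hope rather than a proof.

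Both halves of your proposal therefore have genuine gaps and do not recover the claimed bounds; the paper's actual route --- sparse oriented cliques plus Euler's formula for the lower bound, and $(k,d,N)$-full targets plus $2$-dipath colouring and discharging for the upper bound --- is the idea you should look at.
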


Notice that this essentially answers the question from \cite{bradshaw2023injective} by showing that for all $\epsilon>0$, $\chio(g) = O(g^{1+\epsilon})$, while ensuring that there is no $r < 1$ such that $\chio(g) = O(g^r)$.
That is, the answer to the question from \cite{bradshaw2023injective} is either $1$ or there is no smallest constant with this property.

In Section~2 we define the notation required for the rest of the paper.
The proof of Theorem~\ref{Thm: Main Thm} will be divided into two parts. 
The lower bound is proven in Section~3. 
The proof of the upper bound is given in Section~4 and Section~5.
We conclude with a discussion of future work.

\section{Preliminaries}

In this section we provide the definitions and notations used in the rest of the paper.
We assume the reader is familiar with standard notation in graph theory. 
For readers not familiar with standard graph theoretic notation see \cite{West1996}.

A \emph{surface} is a 
connected compact Hausdorff space which is locally homeomorphic to an open disc in the plane. 
It is well known, see \cite{mohar2001graphs}, that for every surface $S$, there exists a graph $G$ with a \emph{$2$-cell embedding in $S$}, that is, an embedding $\Pi:G \rightarrow S$ such that each connected component of $S \setminus \Pi(G)$ is homeomorphic to an open disc in the plane.
If $S$ is a surface and $G$ is a graph with $n$ vertices and $e$ edges which has an $2$-cell embedding in $G$ with $f$ faces, then the \emph{Euler genus} of $S$ is the quantity $2 - n + e - f$.
The \emph{Euler genus} of a graph $G$ is the minimum value $g$ such that $G$ has an embedding on a surface of Euler genus $g$. 
In particular, a planar graph has Euler genus $g = 0$, and a graph embeddable on the projective plane has Euler genus $g \leq 1$, and a graph embeddable on the torus or Klein bottle has Euler genus $g \leq 2$.

We borrow the following notation and definition from \cite{clow2023oriented}.
Given an oriented graph $G$, a vertex $v \in V(G)$, and an ordered vertex set 
$U = \{u_1, \dots, u_d\} \subseteq N(v)$, 
we write $F(U,v,G)$ for the vector in $\{-1,1\}^d$
whose $i^{\text th}$ entry is $1$ if $(v,u_i)$ is an edge of $G$, and whose $i^{\text th}$ entry is $-1$ if $(u_i,v)$ is an edge of $G$. 

Now, suppose $H$ is an oriented $k$-partite graph with exactly $N$ vertices in each partite set.
Let the partite sets of $H$ be called $P_1, \dots, P_k$.
We say that $H$ is \emph{$(k,d,N)$-full} if 
the following holds:
for each value $i \in  [k]$,
each
ordered subset $U = \{u_1, \dots, u_{t}\} \subseteq \bigcup_{j \neq i} P_j$ of size $t \leq d$, and each vector $\vec{v} \in \{-1,1\}^t$,
there exists
a vertex $x \in P_i$
such that $F(U,x,H) = \vec{v}$.
For an example of a $(k,d,N)$-full graph see Figure~\ref{Fig:(2,2)-full drawing}.

\begin{figure}[h!]
\begin{center}
    \scalebox{0.75}{
        \begin{tikzpicture}[node distance={15mm}, ultra thick, main/.style = {draw, circle}, minimum size = 0.1cm] 

\node[main][fill= black] (+1) at (0,2) {}; 
\node[main][fill= black] (+2) at (0,4) {}; 
\node[main][fill= black] (+3) at (10,2) {}; 
\node[main][fill= black] (+4) at (10,4) {};

\node[main][fill= black] (-1) at (5,0) {}; 
\node[main][fill= black] (-2) at (5,2) {}; 
\node[main][fill= black] (-3) at (5,4) {}; 
\node[main][fill= black] (-4) at (5,6) {}; 

\draw  [->] (+1) -- (-1);
\draw  [->] (+1) -- (-2);
\draw  [->] (-3) -- (+1);
\draw  [->] (-4) -- (+1);

\draw  [->] (-1) -- (+2);
\draw  [->] (+2) -- (-2);
\draw  [->] (+2) -- (-3);
\draw  [->] (-4) -- (+2);

\draw  [->] (-1) -- (+3);
\draw  [->] (-2) -- (+3);
\draw  [->] (+3) -- (-3);
\draw  [->] (+3) -- (-4);

\draw  [->] (+4) -- (-1);
\draw  [->] (-2) -- (+4);
\draw  [->] (-3) -- (+4);
\draw  [->] (+4) -- (-4);
        \end{tikzpicture}
    }
\end{center}
\caption{An example of a $(2,2,4)$-full graph.}
\label{Fig:(2,2)-full drawing}
\end{figure}
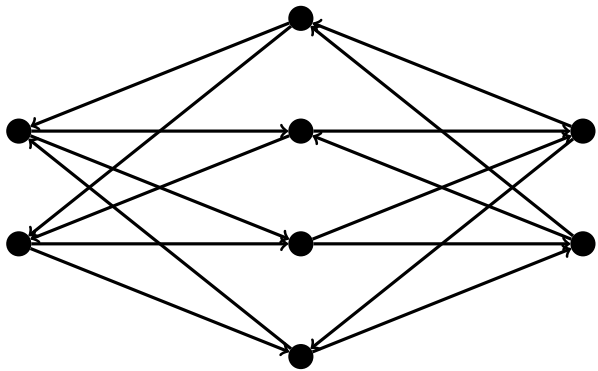

Given an oriented graph $G = (V,E)$ a \emph{$2$-dipath colouring} $\phi: V \rightarrow \mathbb{N}$ is a proper colouring such that for each pair of vertices $u,v$ if $1\leq \dist(u,v) \leq 2$, then $\phi(u) \neq \phi(v)$.
Here distance is directed distance. We say $\phi$ is a $2$-dipath $k$-colouring if $\phi$ is a $2$-dipath colouring and the range of $\phi$ has cardinality at most $k$. Then the $2$-dipath chromatic number of $  G$, denoted $\chis(G)$, is the least integer $k$ such that $G$ admits a $2$-dipath $k$-colouring. 
The $2$-dipath chromatic number of a simple graph is the maximum $2$-dipath chromatic number of any of its orientations.
First proposed by Chen and Wang \cite{min2006dipath}, a $2$-dipath colouring is equivalent to a proper colouring of the directed square, $G^2$, of the graph $G$.

Notice that every oriented colouring is a $2$-dipath colouring. This is because if $\phi$ is a colouring such that there exists a $2$-dipath $u,v,w$ with  $\phi(u) = \phi(w)$, then $\phi$ is not an oriented colouring.
This implies that for all oriented graphs $\chis(G) \leq \chio(G)$. 

A graph (or oriented graph) $G = (V,E)$ is an \emph{oriented clique} if and only if $\chio(G) = |V|$. It is not hard to verify that an oriented graph $G$ is an oriented clique if and only if the directed diameter of $G$ is at most $2$. Observe that this implies an oriented graph $G = (V,E)$ is an oriented clique if and only if $\chis(G)=|V|$.

\section{Oriented Cliques with Small Euler Genus}

This section is devoted to proving that $\chio(g) \geq \Omega(\frac{g}{\log g})$. Our proof relies on an existing result of Kostochka, Luczak, Simonyi, and Sopena from \cite{kostochka1997minimum} which bounds the minimum number of edges in an oriented clique of order $n$. From here we use Euler's formula to bound the Euler genus of such an oriented clique $G$.

The following Lemma is a weaker version of Theorem~2 from \cite{kostochka1997minimum}. As we are focused on the asymptotic order of $\chio(g)$ this weaker result is sufficient.

\begin{lemma}[Theorem~2 \cite{kostochka1997minimum}]\label{Lemma: Kostochka et. al extremal}
    Let $f(n)$ be the least integer such that there exists a graph $G$ on $n$ vertices and $f(n)$ edges with $\chio(G) = n$. If $n \geq 5$, then 
    $$f(n) \leq n\log_2(n).$$
\end{lemma}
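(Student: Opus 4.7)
The plan is to explicitly construct, for each $n \geq 5$, an oriented graph $D$ on $n$ vertices with at most $n \log_2(n)$ arcs whose underlying simple graph satisfies $\chio = n$. By the criterion noted just before the statement, $\chio(D) = |V(D)|$ exactly when every pair of distinct vertices of $D$ is adjacent or joined by a directed 2-path in at least one direction, so it suffices to build such a ``hub-and-code'' oriented clique.

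First I would choose an integer $k$ roughly equal to $\log_2 n$ and partition $V(D)$ into a set $H = \{h_1,\ldots,h_k\}$ of hubs and a set $L$ of non-hubs of size $n-k$. To each non-hub $v \in L$ I would assign a binary codeword $c_v \in \{0,1\}^k$, and place the arc $h_j \to v$ when $c_v[j] = 0$ and $v \to h_j$ when $c_v[j] = 1$, for every $j \in \{1,\ldots,k\}$ and $v \in L$; no other arcs are added. The codewords are selected so that (i) they are pairwise distinct and (ii) the $k$ induced columns of the coding matrix are pairwise distinct as vectors in $\{0,1\}^{|L|}$. Under this construction, any two non-hubs $u, v$ are separated by some bit $j$ and joined by a 2-path through $h_j$ (property (i)); any two hubs $h_i, h_j$ are separated by some non-hub $w$ with $c_w[i] \neq c_w[j]$ and joined by a 2-path through $w$ (property (ii)); and hub-to-non-hub pairs are adjacent. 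Hence $D$ is an oriented clique and $\chio(D) = n$.

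To obtain codes satisfying (i) and (ii), I would include the $k$ standard basis codewords $e_1,\ldots,e_k$ among the codewords (which alone already ensure property (ii), as the $j$-th column then has a unique $1$ in row $e_j$) and fill out $L$ with any further distinct codewords, which is possible whenever $n - k \leq 2^k$. The total arc count is $k(n-k)$, and for $k = \lceil \log_2 n \rceil$ this is at most $n \log_2(n)$ for most $n \geq 5$. The main obstacle I expect is the ceiling cases in which $n$ sits just above a power of two, so that $k = \lceil \log_2 n \rceil$ yields $k(n-k)$ slightly exceeding $n\log_2(n)$. In these cases I would instead use a smaller value of $k$ with $n - k > 2^k$, allow a few non-hubs to share a codeword, and compensate by adding a direct arc between each pair of such twinned non-hubs (whose incident arcs to the hubs are identical and so cannot be distinguished by any hub). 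An elementary calculation, minimising $k(n-k) + \sum_c \binom{m_c}{2}$ over valid choices of $k$ and codeword multiplicities $m_c$, then keeps the total at most $n\log_2(n)$ for all $n \geq 5$, with the finitely many smallest cases verified by hand.
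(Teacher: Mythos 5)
The paper does not actually prove this lemma; it imports Theorem~2 of Kostochka, Luczak, Simonyi, and Sopena \cite{kostochka1997minimum} verbatim and cites it, so there is no internal proof to compare against. Your hub-and-code construction is in the same spirit as the construction used in that cited source, and the mechanism itself is sound: $k$ hubs, non-hubs coded in $\{0,1\}^k$, pairwise-distinct codewords separate any two non-hubs via a 2-dipath through the hub where they differ, pairwise-distinct columns separate any two hubs via a 2-dipath through the non-hub whose codeword distinguishes them, and hub--non-hub pairs are directly adjacent; this is exactly the ``diameter $\leq 2$'' criterion stated just before the lemma.

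Where this falls short of a proof is the arc count, which is the actual content of the statement and which you defer to ``an elementary calculation.'' With $k = \lceil \log_2 n\rceil$ the bound $k(n-k) \leq n\log_2 n$ genuinely fails whenever $n$ sits just above $2^m$ for $m \geq 6$ (e.g.\ $n=65$: $7\cdot 58 = 406 > 65\log_2 65 \approx 391$), so the fallback is essential, but your diagnosis of it is off: for $n = 65$ dropping to $k = \lfloor\log_2 n\rfloor = 6$ already gives $n - k = 59 \leq 64 = 2^k$, so no twinning is needed, contrary to your ``smaller $k$ with $n - k > 2^k$.'' A cleaner route is to always take $k = m := \lfloor \log_2 n\rfloor$: when $n \leq 2^m + m$ the count $m(n-m) < mn \leq n\log_2 n$ is immediate, and when $2^m + m < n < 2^{m+1}$ you need $n - m - 2^m$ twin pairs and must verify $m(n-m) + (n - m - 2^m) \leq n\log_2 n$; writing $n = 2^m(1+t)$ this reduces to $(1+t)\bigl(1 - \log_2(1+t)\bigr) \leq 1 + m(m+1)/2^m$, which holds because the left side is at most $1$ on $t \in (0,1)$. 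That computation is exactly the heart of the lemma and must be carried out, not asserted. You should also relax the insistence on including all $k$ standard basis codewords, which presupposes $n - k \geq k$ (this fails, e.g., for $n = 5$ with $k = 3$); all that is required is column-distinctness, which is available with fewer rows, and in any case the choice $k = \lfloor\log_2 n\rfloor$ always has $n - k \geq k$ for $n \geq 5$.
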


We now proceed to bound the Euler genus of a graph of order $n$ with approximately $kn$ edges. Given Lemma~\ref{Lemma: Kostochka et. al extremal} we will take $k = \log_2(n)$ when proving Theorem~\ref{Thm: Near Linear Lower Bound}.

\begin{lemma}\label{Lemma: Euler's Formula Linear Edges}
    Let $G = (V,E)$ be a graph of order $n$ with at most $kn$ edges. Then, letting $g$ be the Euler genus of $G$
    $$
    g \leq (k-1)n+1.
    $$
\end{lemma}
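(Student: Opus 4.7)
The plan is to apply Euler's formula to a 2-cell embedding of $G$ that realises its Euler genus. Specifically, I will combine the identity $n - e + f = 2 - g$ with the trivial bound $f \geq 1$ on the number of faces.

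First I would reduce to the case that $G$ is connected. This is harmless for the intended downstream application, since oriented cliques have directed diameter at most $2$ and are therefore connected. For a connected graph, Youngs' theorem guarantees that any minimum-Euler-genus embedding is 2-cellular. Thus one may fix a 2-cell embedding $\Pi$ of $G$ on a surface $S$ of Euler genus $g$, and let $f$ denote the number of faces of $\Pi$. By the definition of Euler genus given in the Preliminaries, $g = 2 - n + e - f$, so using $f \geq 1$ and the hypothesis $e \leq kn$ one obtains
$$
g \;\leq\; 1 - n + e \;\leq\; 1 - n + kn \;=\; (k-1)n + 1,
$$
as required.

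The only delicate point is justifying that a minimum-Euler-genus embedding of $G$ can be taken to be 2-cellular. For connected $G$ this is Youngs' theorem. If one wishes to handle disconnected $G$ directly, one can add $c-1$ edges to connect $G$ (where $c$ denotes the number of components of $G$), using that the Euler genus is monotone non-decreasing under edge addition, at the cost of replacing the $+1$ in the bound by a harmless additive $+c$ term; for the downstream application to oriented cliques in Section~3 this subtlety does not arise.
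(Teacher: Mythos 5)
Your proposal is correct and rests on the same core computation as the paper's proof: apply Euler's formula to a minimum-genus embedding, use $f \geq 1$, and combine with $e \leq kn$. The paper's version is terser and does not mention either technical point you raise, so it is worth comparing. You explicitly invoke Youngs' theorem to justify that a minimum-Euler-genus embedding of a connected graph is $2$-cellular, which is genuinely needed to apply $n - e + f = 2 - g$ with equality; the paper takes this for granted. You also correctly observe that the lemma as literally stated fails for disconnected graphs (for instance, $n \geq 2$ isolated vertices have $e = 0$, so with $k = 0$ the claimed bound $g \leq 1 - n$ is violated by $g = 0$), and that the statement should either be restricted to connected graphs or the $+1$ replaced by $+c$. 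You are also right that this does not affect the downstream use in Theorem~\ref{Thm: Near Linear Lower Bound}, since there the lemma is applied to an oriented clique, which has directed diameter at most $2$ and is therefore connected. So your argument is a more careful rendering of the same proof, identifying (and harmlessly repairing) a small imprecision in the paper's statement.
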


\begin{proof}
Let $G = (V,E)$ be a graph of order $n$ with at most $kn$ edges and let $g$ be the Euler genus of $G$. Then Euler's formula implies that
\begin{eqnarray*}
    2 - g & \geq & |V| - |E| + 1\\
    & \geq & (1-k)n + 1.
\end{eqnarray*}
Hence, $g \leq (k-1)n+1$ as desired. This concludes the proof.
\end{proof}

As the functions we will deal with in the proof of Theorem~\ref{Thm: Near Linear Lower Bound} are somewhat awkward to solve without the proper background, we take a moment to mention the Lambert $W$ function, sometimes called the product log function. For reals $x\geq 0$ and $y$, the Lambert $W_0$ function, is the solution in $y$ to the equation $x = ye^y$. That is, if $x = ye^y$ and $x \geq 0$ is real, then $y = W_0(x)$. The Taylor series for $W_0(x)$ is given by 
\begin{eqnarray*}
    W_0(x) & := & \sum_{n=1}^\infty \frac{(-n)^{n-1}}{n!}.
\end{eqnarray*}
Additionally, as this will be useful when dealing with $W_0$, we point out that for reals $x \geq e$, $W_0(x) \geq \log(x) - \log\log(x)$ \cite{hoorfar2008inequalities}. Here $\log(x)$ denotes the natural logarithm of $x$.

With these tools in hand we are prepared to prove the lower bound $\chio(g) = \Omega(\frac{g}{\log(g)})$. In order to do this we prove the following stronger statement.

\begin{theorem}\label{Thm: Near Linear Lower Bound}
 For all integers $g \geq 11$, there exists a graph $G$ with Euler genus at most $g$ such that
 $$
 \chio(G) \geq \frac{\log(2)(g-1)}{\log(g-1)+\log(\log(2)) - \log(2)}.
 $$
\end{theorem}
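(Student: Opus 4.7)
The plan is to combine Lemma~\ref{Lemma: Kostochka et. al extremal} with Lemma~\ref{Lemma: Euler's Formula Linear Edges}. By Lemma~\ref{Lemma: Kostochka et. al extremal}, for every integer $n \geq 5$ there is an oriented graph $G_n$ on $n$ vertices with $\chio(G_n) = n$ and at most $n\log_2(n)$ edges. Applying Lemma~\ref{Lemma: Euler's Formula Linear Edges} with $k = \log_2(n)$ then bounds the Euler genus of $G_n$ by $(\log_2(n) - 1)n + 1$. It therefore suffices to exhibit, for each $g \geq 11$, an integer $n$ satisfying both $(\log_2(n) - 1)n + 1 \leq g$ and $n \geq n^* := \frac{\log(2)(g-1)}{\log(g-1) + \log\log(2) - \log(2)}$; the graph $G_n$ is then the desired $G$.

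The inversion of the genus condition uses the Lambert $W$ function recalled in the preamble. Writing $n = 2m$, the inequality $(\log_2(n) - 1)n \leq g-1$ simplifies to $m\log(m) \leq \frac{(g-1)\log(2)}{2}$. Setting $A = \frac{(g-1)\log(2)}{2}$, the largest real $m$ satisfying $m\log(m) = A$ is $m = e^{W_0(A)}$; and since $g \geq 11$ gives $A > e$, the inequality $W_0(A) \geq \log(A) - \log\log(A)$ yields
$$
m \;\geq\; e^{\log(A) - \log\log(A)} \;=\; \frac{A}{\log(A)}.
$$
Therefore $n = 2m \geq \frac{(g-1)\log(2)}{\log(A)}$, and expanding $\log(A) = \log(g-1) + \log\log(2) - \log(2)$ reproduces the claimed expression for $n^*$.

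The remaining subtlety is to pass from the real lower bound $n^*$ to an integer $n$ for which the genus condition $(\log_2(n)-1)n+1 \leq g$ still holds. Taking $n = \lceil n^* \rceil$ works because the Lambert~$W$ step leaves slack of order $(g-1)\log(L)/L$ (with $L = \log(A)$) in the real inequality, which dominates the $O(\log g)$ cost of rounding up. At the boundary $g = 11$ this reduces to a direct numerical check: $n^* \approx 5.57$, $\lceil n^* \rceil = 6$, and $6(\log_2(6) - 1) + 1 \approx 10.51 \leq 11$. I expect this integer-rounding bookkeeping to be the main obstacle; the Lambert~$W$ computation itself is otherwise routine once the substitution $n = 2m$ has been made.
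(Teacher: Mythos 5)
Your skeleton matches the paper's: both proofs feed the extremal oriented clique of Lemma~\ref{Lemma: Kostochka et. al extremal} into Lemma~\ref{Lemma: Euler's Formula Linear Edges}, make the substitution $n = 2m$ to reduce the genus constraint to $m\log(m) \leq A$ with $A = \tfrac{(g-1)\log 2}{2}$, and invert via $W_0$ using the bound $W_0(x) \geq \log x - \log\log x$. The difference is in the integer-rounding step, and there you have a genuine gap. The paper lets $n$ be the \emph{largest} integer satisfying the genus constraint (so its existence is automatic) and then inverts the \emph{failure} of the constraint at $n+1$; you instead fix $n = \lceil n^* \rceil$ and must separately prove that this particular integer satisfies $(\log_2 n - 1)n + 1 \leq g$. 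Your justification --- that the slack in the real inequality ``dominates the $O(\log g)$ cost of rounding up'' --- is an asymptotic heuristic, not a proof. The slack is smallest near $g = 11$, where $n_{\max} - n^* \approx 0.6 < 1$, and you verify only the single value $g=11$ numerically; to make this airtight you would need an explicit slack lower bound valid for all $g \geq 11$, or a finite case check up to a threshold past which the asymptotic estimate can be made honest, and neither is supplied.

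One further observation worth recording: the paper's own route inverts the strict inequality $g < (\log_2(n+1)-1)(n+1)+1$ to arrive at $n > n^* - 1$, which for non-integral $n^*$ only forces $\chio(G) \geq \lfloor n^* \rfloor$, a hair short of the displayed bound $\chio(G) \geq n^*$. Your plan, if the rounding step were completed, would in fact recover the bound exactly as stated, since $\lceil n^* \rceil \geq n^*$ directly. So the ``bookkeeping'' you flag as the main obstacle is not a formality --- it is precisely the piece that separates the off-by-one conclusion the paper's algebra literally produces from the inequality the theorem asserts.
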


\begin{proof}
Let $g \geq 11$ be a fixed integer
and let $n$ be the greatest integer satisfying that 
$$
g \geq  (\log_2(n)-1)n + 1.
$$
As $g \geq 11$, we conclude that $n \geq 5$. Let $G = (V,E)$ be a graph with as few edges as possible subject to the constrain that $|V| = n = \chio(G)$. Hence, by Lemma~\ref{Lemma: Kostochka et. al extremal}
$$
    |E| \leq n\log_2(n).
$$
Then, letting $g(G)$ be the Euler genus of $G$, Lemma~\ref{Lemma: Euler's Formula Linear Edges} implies that
$$
    g(G) \leq (\log_2(n)-1)n+1.
$$
So, by our choice of $n$, $G$ has Euler genus $g(G) \leq g$.
Also observe that as we chose $n$ to be as large as possible, $g < (\log_2(n+1)-1)(n+1) + 1$. 
Solving this inequality for $n$, with computer assistance, implies that $n$ is bounded below as a function of $g$. 
In particular,
\begin{eqnarray*}
    n & > & 2 \exponential \Big{(} W_0 \Big{(}  \frac{(g-1)\log(2)}{2}   \Big{)}   \Big{)} - 1 \\
    & \geq & 2 \exponential \Bigg{(}  \log \Big{(}  \frac{(g-1)\log(2)}{2} \Big{)}  - \log\log  \Big{(}  \frac{(g-1)\log(2)}{2}   \Big{)}    \Bigg{)} - 1 \\
    & = & \frac{\log(2)(g-1)}{\log(g-1)+\log(\log(2)) - \log(2)} - 1    
\end{eqnarray*}
where the second inequality follows from the fact that for $g \geq 11$, $\frac{(g-1)\log(2)}{2} > e$, and for all reals $x \geq e$, $W_0(x) \geq \log(x) - \log\log(x)$.

Recall that $\chio(G) = n$. 
Then, $G$ is a graph of Euler genus at most $g$ such that
$$
\chio(G) > \frac{\log(2)(g-1)}{\log(g-1)+\log(\log(2)) - \log(2)} - 1
$$
as desired.
\end{proof}

\section{Constructing Oriented Homomorphisms}

In this section we will prove that every oriented graph with Euler genus at most $g$ has an oriented homomorphism to some graph in a particular family of oriented graphs.
We will later see in Section~5 that each graph in this family has order $O(g\log(g))$.
The first order of business then is to describe this family of target graphs.

Let $g\geq 2$ be a fixed but arbitrary integer.
Let $K^{(g)}$ be a $(144g -162,10,N)$-full graph, where $N$ is chosen to be as small as possible.
We have not yet proven $K^{(g)}$ exists, this will be done in Section~5, suppose that $K^{(g)}$ exists for now.
As $K^{(g)}$ is $(144g -162,10,N)$-full, the underlying graph is a complete $144g -162$ partite graph, where each partite set has order $N$.
Let $P_1,\dots, P_{144g -162}$ be a fixed labeling of the partite sets of $K^{(g)}$.

We define $H_g$ as the subgraph of $K^{(g)}$ given by deleting all edges between vertices $u \in P_i$ and $v\in P_j$ for $i,j> 138g -162$.
More formally, 
$$H_g = K^{(g)} - E[K^{(g)}[P_{138g -161} \cup  \dots \cup P_{144g -162}]].$$
For all $1 \leq i \leq 138g -162$ we let $P_i$ denote the same set of vertices in $H_g$ as in $K^{(g)}$, while we let the union of all sets $P_i$ where $i > 138g -162$ be denoted $P_0$. 
The partition $P_0, P_1\dots, P_{138g -162}$ of $V(H_g)$ will be used throughout this section.

We are now prepared to state the main result of this section. 
We state the theorem we are trying to prove at the beginning of the section, then prove it at the end of the section because we will use the discharging method.

\begin{theorem}\label{Thm: Colouring Upper Bound}
    Let $g\geq 2$ be an integer.
    If $G = (V,E)$ is an oriented graph with Euler genus at most $g$, then there exists an oriented graph $H$ such that $H_g$ is a spanning subgraph of $H$ and $G$ has an oriented homomorphism to $H$.
\end{theorem}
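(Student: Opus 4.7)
The plan is to build the oriented homomorphism greedily along a carefully chosen vertex ordering of $G$: a small ``hard core'' is placed inside $P_0$ using the fact that $H_g$ has no edges between vertices of $P_0$ (so we are free to add such edges to $H$), while the remaining vertices are placed one at a time using the $(144g-162,10,N)$-full structure of $K^{(g)}$.

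For the discharging step I would show that $V(G)$ admits an ordering $v_1, \dots, v_n$ such that for some $M \leq 6g$, every $v_i$ with $i > M$ has at most $10$ earlier neighbours. This follows by repeatedly peeling off vertices of degree at most $10$; the leftover subgraph $B$ has Euler genus at most $g$ and minimum degree at least $11$, so Euler's formula $|E(B)| \leq 3|V(B)| + 3g - 6$ together with $2|E(B)| \geq 11|V(B)|$ forces $|V(B)| \leq (6g-12)/5 < 6g$. Place the vertices of $B$ first in the ordering, then the peeled vertices in the reverse order of their deletion.

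Next, map each $v_i$ with $i \leq M$ to a distinct vertex of $P_0 = P_{138g-161} \cup \dots \cup P_{144g-162}$, which is possible since $|P_0| = 6gN \geq 6g \geq M$. For every edge of $G$ between two core vertices, insert that edge into $H$ with the orientation inherited from $G$; this is permitted because $H_g$ has no edges within $P_0$, so any edges placed there give a valid supergraph of $H_g$ as an oriented graph. Then, processing $v_i$ for $i > M$ in order, let $U$ be the ordered tuple of the at most $10$ images of earlier neighbours of $v_i$ and let $\vec v \in \{-1,1\}^{|U|}$ record the corresponding orientations in $G$. Choose $j' \in \{1, \dots, 138g-162\}$ with $P_{j'} \cap U = \emptyset$; such $j'$ exists since at most $10$ partite sets are blocked and $138g-162 > 10$ for $g \geq 2$. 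By $(144g-162,10,N)$-fullness applied inside $K^{(g)}$, there is $u \in P_{j'}$ with $F(U, u, K^{(g)}) = \vec v$; set $\phi(v_i) := u$. Each edge between $\phi(v_i)$ and a back-neighbour image lies in $K^{(g)}$ with the correct orientation, and it survives into $H_g \subseteq H$ because $j' \leq 138g - 162$.

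The only place where real effort is needed is the discharging in the second paragraph, but the crude Euler-formula bound is already enough to guarantee a core well under $6g$ vertices, so the main challenge is just verifying that everything fits together. The extension step is essentially forced by the definition of $(k,d,N)$-fullness once $P_{j'}$ has been chosen to avoid the partite sets containing previously placed neighbours, and the core step exploits exactly the flexibility inside $P_0$ that motivated the definition of $H_g$ in the first place.
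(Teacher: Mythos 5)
Your greedy step has a genuine gap. When you place $v_i$ for $i > M$, you form the tuple $U$ of images of earlier neighbours of $v_i$ and try to read off an orientation vector $\vec v$. But nothing in your construction prevents two earlier neighbours $v_j, v_q$ of $v_i$ from receiving the \emph{same} image $\phi(v_j)=\phi(v_q)$ while having \emph{opposite} orientations relative to $v_i$ (say $v_j \to v_i$ and $v_i \to v_q$, i.e.\ $v_j, v_i, v_q$ is a $2$-dipath in $G$). In that situation $\vec v$ is ill-defined: the entry for $\phi(v_j)=\phi(v_q)$ would have to equal both $-1$ and $+1$, and indeed no vertex $u$ of $K^{(g)}$ can have both $(\phi(v_j), u)$ and $(u, \phi(v_q))=(u, \phi(v_j))$ as edges of an oriented graph. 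The fullness property then offers nothing. Such a collision is entirely possible in your scheme: when $v_q$ was placed, $v_j$ need not be a neighbour of $v_q$, so $\phi(v_j)$ was never in $U_{q}$ and the partite set $P_{j'_q}$ chosen for $\phi(v_q)$ may well contain $\phi(v_j)$; the fullness condition only exhibits \emph{some} vertex with the right local orientation profile and gives no mechanism to avoid a particular unwanted vertex in the same partite set.

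This is precisely the obstruction the paper's proof is built around. The paper's analogue of your greedy step (the proof that a smallest counterexample satisfies $\chis(G) > 138g-162$) assumes a $2$-dipath colouring $\psi$ of $G$ with at most $138g-162$ colours and uses it to \emph{steer} each non-core vertex $v_j$ into the partite set $P_{\psi(v_j)}$. Because $\psi$ is a $2$-dipath colouring, any two vertices joined by a $2$-dipath land in distinct partite sets, so the collision above never occurs and $\vec v$ is always well-defined. Obtaining such a $\psi$ is not free: the bound $\chis(G) \le 138g-162$ fails for general graphs of Euler genus $g$, and establishing it in a smallest counterexample is what forces the paper to take the contrapositive route via the reducibility lemmas (minimum degree at least $4$; low-degree vertices have high-degree neighbours), the $2$-dipath bound in terms of maximum degree and degeneracy, and finally the discharging argument that caps $\Delta(G)$ at $12g-12$. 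Your proposal drops all of this machinery, keeping only the peeling argument (which is correct, and matches the paper's degeneracy lemma) and the fullness-based extension, and as a result it skips the one ingredient that makes the extension step succeed. As written, the proof does not go through.
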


Suppose throughout this section that $g\geq 2$ is a fixed integer.
We will prove
Theorem~\ref{Thm: Colouring Upper Bound} for this choice of $g$ by the discharging method. 
In service of this we will prove various properties of a smallest counterexample to Theorem~\ref{Thm: Colouring Upper Bound}.
To increase readability,
if we say that an oriented graph $G$ is a \emph{smallest counterexample}, then suppose that $G$ has Euler genus at most $g$, 
$G$ is a counterexample to Theorem~\ref{Thm: Colouring Upper Bound}, and
for all graphs $G'$ satisfying that
\begin{itemize}
    \item $G'$ has order strictly less than $G$, and $G'$ has Euler genus at most $g$, or
    \item $G'$ is a proper subgraph of $G$,
\end{itemize}
there exists an oriented graph $H$ such that $H_g$ is a spanning subgraph of $H$ and $G'$ has an oriented homomorphism to $H$. 
That is, $G'$ is not a counterexample to Theorem~\ref{Thm: Colouring Upper Bound} for this choice of $g$.

We begin by proving a useful property about oriented graphs $H$ which contain $H_g$ as a spanning subgraph.

\begin{lemma}\label{Lemma: full H}
    Let $H$ be an oriented graph that contains $H_g$ is a spanning subgraph.
    For all $1 \leq i \leq 138g-162$, and $A \subseteq V(H)$ such that $|A| \leq 10$, and $\vec{v} \in \{-1,1\}^{|A|}$, 
    if $A\cap P_i = \emptyset$, then there exists an $s\in P_i$ such that $\vec{v} = F(A,s,H)$.
\end{lemma}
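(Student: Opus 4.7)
The plan is to leverage the $(144g-162, 10, N)$-fullness of $K^{(g)}$ and then to observe that, for $i \leq 138g-162$, none of the edges deleted when forming $H_g$ from $K^{(g)}$ are incident to $P_i$. Concretely, I would fix $A \subseteq V(H)$ with $|A| \leq 10$, $A \cap P_i = \emptyset$, and $\vec{v} \in \{-1,1\}^{|A|}$. Since $V(H) = V(H_g) = V(K^{(g)})$, and since for $1 \leq j \leq 138g-162$ the partite set $P_j$ refers to the same vertex set in both $H_g$ and $K^{(g)}$, the assumption $A \cap P_i = \emptyset$ means that $A$ is disjoint from the $i$-th partite set of $K^{(g)}$. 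Applying the $(144g-162,10,N)$-fullness of $K^{(g)}$ with this ordered subset and sign vector produces a vertex $s \in P_i$ with $F(A, s, K^{(g)}) = \vec{v}$.

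It then remains to transport this realization first to $H_g$ and then to $H$. By construction, the only edges removed in passing from $K^{(g)}$ to $H_g$ lie in $K^{(g)}[P_{138g-161} \cup \cdots \cup P_{144g-162}]$. Since $s \in P_i$ with $i \leq 138g-162$, none of the deleted edges is incident to $s$, so for every $a \in A$ the edge between $s$ and $a$ in $K^{(g)}$ survives in $H_g$ with the same orientation; hence $F(A, s, H_g) = F(A, s, K^{(g)}) = \vec{v}$. Finally, because $H_g$ is a spanning subgraph of the oriented graph $H$, every oriented edge of $H_g$ appears in $H$ with the same orientation (and the oriented-graph condition forbids any anti-parallel edge), so $F(A, s, H) = F(A, s, H_g) = \vec{v}$, which is what the lemma asks for.

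There is essentially no obstacle; the argument is bookkeeping, but the subtlety worth flagging is the asymmetric role of the index $138g-162$ in the lemma versus the index $144g-162$ in the definition of fullness. The hypothesis $i \leq 138g-162$ is exactly what guarantees that the deletions used to build $H_g$ leave every neighborhood of $P_i$ intact, and this is precisely what allows the fullness property of $K^{(g)}$ to be inherited by any oriented supergraph of $H_g$.
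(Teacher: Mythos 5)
Your proposal is correct and takes essentially the same route as the paper: invoke the $(144g-162,10,N)$-fullness of $K^{(g)}$ to find $s\in P_i$ realizing $\vec v$, then observe that since $A\cap P_i=\emptyset$ and $s\notin P_0$, the relevant edges survive the deletion that defines $H_g$ and persist with the same orientation in any oriented supergraph $H$. If anything, your explicit remark that no deleted edge is incident to $s$ (because $i\leq 138g-162$) is a cleaner justification than the paper's slightly overbroad claim that the in- and out-neighbourhoods of $z\in P_i$ in $H$ are subsets of those in $K^{(g)}$, which would not literally hold if $H$ added edges inside $P_i$; the hypothesis $A\cap P_i=\emptyset$ is what actually carries the argument, and you use it correctly.
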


\begin{proof}
     Let $H$ be an oriented graph that contains $H_g$ is a spanning subgraph, $A \subseteq V(H)$ such that $|A| \leq 10$, and let $\vec{v} \in \{-1,1\}^{|A|}$.
     We claim that for $1 \leq i \leq 138g-162$ if $A \cap P_i = \emptyset$, then there exists an $s\in P_i$ such that $\vec{v} = F(A,s,H)$.

     Recall that $H_g$ is a spanning subgraph of $H$ and 
     $$
     H_g = K^{(g)} - E[K^{(g)}[P_0]]
     $$
     for a $(144g -162,10,N)$-full graph $K^{(g)}$, whose partite sets are $P_1,\dots,P_{144g -162}$.
     Then, for each $z \in P_i$, the out-neighbours of $z$ in $H$ are a subset of the out-neighbours of $z$ in $K^{(g)}$, and similarly 
     the in-neighbours of $z$ in $H$ are a subset of the in-neighbours of $z$ in $K^{(g)}$.
     Hence, if $\vec{v} = F(A,s,K^{(g)})$ for a vertex $s \in P_i$, then $\vec{v} = F(A,s,H)$.

     As we have assumed $A \cap P_i = \emptyset$ and $|A| \leq 10$, our choice of $K^{(g)}$ as a $(144g -162,10,N)$-full graph implies that there exists an $s\in P_i$ such that $\vec{v} = F(A,s,K^{(g)})$.
     We have already seen that this implies $\vec{v} = F(A,s,H)$ given $1\leq i \leq 138g-162$.
     This completes the proof.
\end{proof}

\begin{lemma}\label{Lemma: <=3-vertex}
    If $G=(V,E)$ is a smallest counterexample,
    then $\delta(G)\geq 4$.
\end{lemma}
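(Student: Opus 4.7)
The plan is to argue by contradiction via the standard minimum-counterexample reduction. I would suppose $G$ is a smallest counterexample containing a vertex $v$ with $d(v)\leq 3$, and write $N(v)=\{u_1,\ldots,u_d\}$. From $G$ I would build a strictly smaller oriented graph $G^{\ast}$ of Euler genus at most $g$; minimality then supplies an oriented homomorphism $\phi:G^{\ast}\to H'$ with $H_g$ a spanning subgraph of $H'$, and I would extend $\phi$ to $v$ by choosing its image via Lemma~\ref{Lemma: full H}.

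Concretely, I would let $G^{\ast}$ be obtained from $G$ by deleting $v$ and adding, with any chosen orientation, every edge of the complete graph on $N(v)$ that is missing in $G-v$. This is at most $\binom{3}{2}=3$ edges, so $G^{\ast}$ is a simple oriented graph with $|V(G)|-1$ vertices. To check that $G^{\ast}$ has Euler genus at most $g$, I would fix an embedding of $G$ in a surface of Euler genus $g$ and truncate at $v$: replace $v$ by a small copy of $K_d$ drawn inside a disk around $v$, joining each new boundary vertex $v_i$ to $u_i$ along the half-edge previously representing $vu_i$. Since $K_d$ is outerplanar for $d\leq 3$, this truncation yields a valid embedding on the same surface; then contracting each edge $v_iu_i$ (and collapsing any resulting parallel edges, keeping a single orientation) neither increases the Euler genus nor destroys the clique on $N(v)$, and produces $G^{\ast}$.

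By minimality applied to $G^{\ast}$, there is an oriented homomorphism $\phi:G^{\ast}\to H'$ with $H_g$ a spanning subgraph of $H'$. The added clique on $N(v)$ forces $\phi(u_1),\ldots,\phi(u_d)$ to be pairwise distinct, so $A=(\phi(u_1),\ldots,\phi(u_d))$ is an ordered set of $d\leq 3\leq 10$ vertices of $H'$. I let $\vec{w}\in\{-1,1\}^d$ be the orientation vector with $\vec{w}_i=1$ when $(v,u_i)\in E(G)$ and $\vec{w}_i=-1$ when $(u_i,v)\in E(G)$. Since $g\geq 2$ yields $138g-162\geq 114>3$, some $P_i$ with $1\leq i\leq 138g-162$ is disjoint from $A$, and Lemma~\ref{Lemma: full H} then produces $s\in P_i$ with $F(A,s,H')=\vec{w}$. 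Extending $\phi$ by $\phi(v)=s$ yields an oriented homomorphism $G\to H'$: $s\notin A$ guarantees $\phi(v)\neq\phi(u)$ for every $u\in N(v)$, and the condition $F(A,s,H')=\vec{w}$ is precisely the statement that each edge of $G$ incident to $v$ is mapped to an edge of $H'$ with the correct orientation. This contradicts $G$ being a counterexample.

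The main obstacle, and the reason the same approach does not immediately yield $\delta(G)\geq 5$, will be the geometric step: forcing the images of $N(v)$ to be distinct via a clique addition requires $K_d$ to be drawable inside a disk with $u_1,\ldots,u_d$ fixed in cyclic order on its boundary, which succeeds for $d\leq 3$ but fails for $d\geq 4$, since the two diagonals of a quadrilateral must cross. Everything else is a routine application of Lemma~\ref{Lemma: full H} in the easy regime $|A|\leq 3\leq 10$, where the $(144g-162,10,N)$-fullness of the target leaves ample room.
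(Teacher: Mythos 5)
Your argument is correct and follows the same route as the paper's own proof: delete the low-degree vertex $v$, complete $N(v)$ to a clique in $G-v$ to force distinct images, invoke minimality to colour the smaller graph, and then use Lemma~\ref{Lemma: full H} with $|A|\leq 3 \leq 10$ to pick an image for $v$ in some $P_i$ disjoint from $A$. The only difference is that you spell out the genus-preservation step (via the disk/truncation picture for $d\leq 3$) which the paper asserts in one sentence; this is a more careful justification of the same fact rather than a different method.
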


\begin{proof}
    Let $G=(V,E)$ is a smallest counterexample such that $\delta(G) \leq 3$.
    Then, there exists a vertex $v\in V$ such that $\deg(v)\leq 3$.
    If  $G[N(v)]$ is a clique, then $G' = G - v$.
    Otherwise let $G'$ be an oriented give by adding edges between the neighbours of $v$ in $G-v$ so that $G'[N(v)]$ is a clique.

    As $\deg(v) \leq 3$, $G'$ has Euler genus at most $g(G)\leq g$ where $g(G)$ is the Euler genus of $G$.
    Given $G$ is a smallest counterexample and $G'$ has order strictly less than $G$, there exists an oriented graph $H$ such that $H_g$ is a spanning subgraph of $H$ and $G'$ has an oriented homomorphism to $H$.
    Let $\phi'$ be an oriented homomorphism from $G'$ to $H$.

    As $G'[N(v)]$ is a clique, for all vertices $u,w \in N(v)$, $\phi'(u) \neq \phi'(w)$.
    Let $A = \{a_1,\dots, a_{\deg(v)}\}$ be an ordering of the set $\{\phi'(u): u\in N(v)\}$.
    Then, $|A|\leq 3$ as $\deg(v)\leq 3$.
    
    Let $\vec{v} \in \{-1,1\}^{|A|}$ with $x^{th}$ entry $1$ if $(v,u)\in E$, where $\phi'(u) = a_x$, and $x^{th}$ entry $-1$ otherwise.
    As $138g-162 -3 > 0$ and $|A|\leq 3$, there exists a $P_i$ for some $1\leq i \leq 138g-162$ such that $A \cap P_i = \emptyset$.
    Thus, Lemma~\ref{Lemma: full H} implies there exists a vertex $s\in V(H)$ such that $\vec{v} = F(A,s,H)$.
    Let $s$ be such a vertex

    Let $\phi:V \rightarrow V(H)$ be defined so that $\phi(v) = s$ and for all $u \in V\setminus \{v\}$, $\phi(u) = \phi'(u)$.
    Then, $\phi$ is an oriented homomorphism from $G$ to $H$, where $H$ contains $H_g$ as a spanning subgraph.
    But this contradicts $G$ being a smallest counterexample. 
    We conclude that if $G$ is a smallest counterexample, then $\delta(G) \geq 4$.
\end{proof}

\begin{lemma}\label{Lemma: ajacent low degree vertices}
If $G=(V,E)$ is a smallest counterexample,
then for all vertices $v\in V$, such that $4 \leq \deg(v) \leq 5$, every vertex $w\in N(v)$ satisfies $\deg(w) \geq 12$.
\end{lemma}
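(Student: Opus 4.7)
Suppose for contradiction that a smallest counterexample $G$ contains a vertex $v$ with $4 \le \deg(v) \le 5$ and a neighbour $w \in N(v)$ satisfying $\deg(w) \le 11$. The plan is a standard reducibility argument in the same spirit as Lemma~\ref{Lemma: <=3-vertex}. First, form a smaller graph $G'$ by deleting $v$ from $G$ and adding a few oriented chords inside the face vacated by $v$, chosen so that in any oriented homomorphism $\phi'$ of $G'$, every pair of vertices in $N(v)\setminus\{w\}$ that are oppositely oriented at $v$ receives distinct colours. The forcing mechanism is the $2$-dipath property of oriented homomorphisms: if $\phi'(x)=\phi'(y)=c$ and $G'$ contains a directed path $x\to z\to y$, then the target graph would contain both $c\to \phi'(z)$ and $\phi'(z)\to c$, contradicting that it is oriented.

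Because $|N(v)\setminus\{w\}|\le 4$ and the face vacated by $v$ is a quadrilateral or pentagon on whose boundary the neighbours of $v$ lie, the required $2$-dipath chords can be embedded planarly inside that face (in the pentagonal case by choosing a pivot vertex from $N(v)$ and drawing spoke-like oriented edges from it). Hence $G'$ has Euler genus at most $g$ and order strictly less than $G$. By minimality there exist an oriented graph $H$ containing $H_g$ as a spanning subgraph and an oriented homomorphism $\phi'\colon V(G')\to V(H)$. By construction the set $B=\phi'(N(v)\setminus\{w\})$ is pairwise distinct, with $|B|\le 4$.

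Next, re-colour $w$. Let $A_w=\phi'(N_G(w)\setminus\{v\})$, so $|A_w|\le \deg(w)-1\le 10$. Because $|A_w\cup B|\le 14$ and $138g-162\ge 114$ for $g\ge 2$, some index $i_0\in [1,138g-162]$ satisfies $P_{i_0}\cap (A_w\cup B)=\emptyset$, and Lemma~\ref{Lemma: full H} furnishes a vertex $s_w\in P_{i_0}$ realising the orientation vector of $w$'s edges in $G-v$. Redefining $\phi'(w):=s_w$ keeps $\phi'$ a valid homomorphism of $G'$ into $H$, and the colours now assigned to $N(v)$ form the pairwise distinct set $\{s_w\}\cup B$ of cardinality at most $5$.

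Finally, since at most five partite sets contain these colours, pick $i_1\in [1,138g-162]$ with $P_{i_1}\cap (\{s_w\}\cup B)=\emptyset$ and apply Lemma~\ref{Lemma: full H} once more to produce $s_v\in P_{i_1}$ realising the orientation vector of $v$'s edges in $G$. Setting $\phi(v):=s_v$ extends $\phi'$ to an oriented homomorphism of $G$ into $H$, contradicting the fact that $G$ is a counterexample. The main obstacle is the preparatory step: one must verify that the $2$-dipath forcing chords can genuinely be embedded inside the face vacated by $v$ with orientations compatible with the already-present edges, and without raising the Euler genus above $g$; this requires a short case analysis in the pentagonal setting based on how $N(v)\setminus\{w\}$ splits into in- and out-neighbours of $v$.
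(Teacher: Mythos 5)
Your overall scaffolding (get a homomorphism of a smaller graph, re-colour $w$ to dodge a small forbidden set, then place $v$ via Lemma~\ref{Lemma: full H}) matches the paper's in the last two steps, but your choice of reduced graph $G'$ is genuinely different, and that choice introduces a gap the paper avoids entirely.

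The paper takes $G' = G - e$ where $e = vw$. Because $v$ is retained in $G'$ with all of its edges except the one to $w$, any oriented homomorphism $\phi'$ of $G'$ \emph{automatically} assigns distinct colours to every pair of vertices in $N(v)\setminus\{w\}$ that is oppositely oriented at $v$ (such a pair forms a $2$-dipath through $v$ in $G'$). Hence the orientation vector for $v$ over $A = \{\phi(u): u\in N(v)\setminus\{w\}\}$ is well-defined for free, and the only extra constraint is $\phi(w)\notin A$, which is exactly what the re-colouring of $w$ enforces. In contrast, you delete $v$ outright and try to recover this property by adding ``$2$-dipath forcing chords'' inside the face vacated by $v$. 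That is precisely the mechanism the paper uses in Lemma~\ref{Lemma: <=3-vertex} (make $N(v)$ a clique), and it works there because $\deg(v)\le 3$ means a $K_3$ on $N(v)$ embeds in the vacated face. For $\deg(v)\in\{4,5\}$ this breaks: a $K_4$ or $K_5$ cannot be drawn with all vertices on a single face boundary, so you cannot simply cliquify, and the spoke construction you gesture at does not obviously close.

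Concretely, the gap is in your sentence ``the required $2$-dipath chords can be embedded planarly inside that face\ldots this requires a short case analysis.'' That case analysis is the whole content of the lemma in your approach, and there are real obstacles: (i) the vacated face need not be a simple quadrilateral or pentagon --- its boundary walk can repeat vertices or edges when $v$ is a cut vertex or when two faces at $v$ share more than one edge; (ii) some of the chords you want may already exist in $G$ with the opposite orientation from what your pivot scheme demands, e.g.\ if $x$ is an out-neighbour of $v$, $y$ an in-neighbour, and the edge $xp$ already exists as $x\to p$ while $y\to p$ is added, then $y\to p \leftarrow x$ is not a $2$-dipath and $\phi'(x)=\phi'(y)$ is not blocked; (iii) even when no edges pre-exist, the forcing chords must be drawn without crossings inside one face, which limits which pairs you can join. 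None of these issues arises in the paper's $G-e$ argument. A smaller inaccuracy: you claim $B = \phi'(N(v)\setminus\{w\})$ is ``pairwise distinct,'' but your chords at best force distinctness on oppositely-oriented pairs; same-side pairs may collide (which is fine --- the orientation vector is still well-defined --- but the stated claim is false as written). The cleanest fix is to abandon vertex deletion here and use edge deletion as the paper does.
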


\begin{proof}
Suppose $G$ is a smallest counterexample such that there exists adjacent vertices $v,w\in V$ where $4 \leq \deg(v) \leq 5$ and $\deg(w) < 12$.
Let $e$ be the edge indecent to $v$ and $w$.
As $G$ is a smallest counterexample $G' = G-e$ is a not a counterexample.
Then there exists an oriented graph $H$ such that $H_g$ is a spanning subgraph of $H$ and $G'$ has an oriented homomorphism to $H$.
Let $H$ be such an oriented graph and $\phi'$ an oriented homomorphism from $G'$ to $H$.

We will define an oriented homomorphism $\phi$ from $G$ to $H$. 
For all $u \in V \setminus \{v,w\}$ let $\phi(u) = \phi'(u)$.
As $\phi'$ is an oriented homomorphism from $G'$ to $H$ for all $x,y \in V \setminus \{v,w\}$ if $x,v,y$ or $x,w,y$ is a $2$-dipath in $G$, then $\phi'(x) \neq \phi'(y)$ implying that $\phi(x) \neq \phi(y)$.
Let $A = \{a_1,\dots, a_{|A|}\}$ be an ordering of $\{\phi(u): u \in N(v)\setminus \{w\}\}$ and let $B = \{b_1,\dots, b_{|B}\}$ be an ordering of $\{\phi(u): u \in N(w)\setminus \{v\}\}$.

As vertices with different orientations to $w$ receive different colours, i.e for all $x,y \in V \setminus \{v,w\}$, $\phi(x)\neq \phi(y)$ if $x,w,y$ is a $2$-dipath in $G$, the vector $\vec{w} \in \{-1,1\}^{|B|}$ with $x^{th}$ entry $1$ if $(w,u)\in E$, where $\phi(u) = b_x$, and $x^{th}$ entry $-1$ otherwise, is well defined. 
Observe that for any $s \in V(H)$ such that $\vec{w} = F(B,s,H)$, we may let $\phi(w) = s$ and the resulting map $\phi$ is an oriented homomorphism from $G-v$ to $H$.
Here we consider $G-v$ as $\phi(v)$ has not yet been defined.

Recall that $|B| \leq \deg(w)-1 \leq 10$.
As $H_g$ is a spanning subgraph of $H$ Lemma~\ref{Lemma: full H} implies that
for all $1 \leq i \leq 138g-162$ such that $B \cap P_i = \emptyset$, there exists an $s \in P_i$ where $\vec{w} = F(B,s,H)$.
As $|B| \leq 10$, there are at least $138g-162-10 > 4$ vertices $s \in V(H)$ where $\vec{w} = F(B,s,H)$.
Hence, as $|A| \leq 4$ there exists a vertex $z \in V(H)$ such that $\vec{w} = F(B,z,H)$ and $z \notin A$.
Let $\phi(w)=z$ for such a vertex $z$.

Now let $\vec{v} \in \{-1,1\}^{|A|+1}$ be the vector with, for all $1\leq x \leq |A|$, $x^{th}$ entry $1$ if $(v,u)\in E$, where $\phi(u) = a_x$, and $x^{th}$ entry $-1$ otherwise, while the $(|A|+1)^{th}$ entry of $\vec{v}$ is equal to $1$ if $(v,w)\in E$, and equal to $-1$ if $(w,v)\in E$. 
As vertices with different orientations to $v$ receive different colours, i.e for all $x,y \in V$, $\phi(x)\neq \phi(y)$ if $x,v,y$ is a $2$-dipath in $G$, $\vec{v}$ is well defined.
Hence, for any $s \in V(H)$ such that 
$$
\vec{v} = F(\{a_1,\dots, a_{|A|},z=\phi(w)\},s,H),
$$
we may let $\phi(v) = s$ and the resulting map $\phi$ is an oriented homomorphism from $G$ to $H$.

Recall that $|A|+1 \leq \deg(v) \leq 5$.
As $H$ contains $H_g$ as a spanning subgraph Lemma~\ref{Lemma: full H} implies that
for all $1 \leq i \leq 138g-162$ such that $\{a_1,\dots, a_{|A|},z\} \cap P_i = \emptyset$, there exists an $s \in P_i$ where $\vec{v} = F(\{a_1,\dots, a_{|A|},z\},s,H)$. 
Thus, $138g-162 - 5 > 0$ implies that there exists a vertex $s\in V(H)$ such that $\vec{v} = F(\{a_1,\dots, a_{|A|},z\},s,H)$.
Let $\phi(v)=s$ for such a vertex $s$.
We have shown $G$ has an an oriented homomorphism to $H$

Therefore, $G$ has an oriented homomorphism to an oriented graph $H$ which contains $H_g$ as a spanning subgraph.
But this contradicts $G$ being a smallest counterexample.
We conclude that if $G$ is a smallest counterexample, then for all vertices $v\in V$, such that $4 \leq \deg(v) \leq 5$, every vertex $w\in N(v)$ satisfies $\deg(w) \geq 12$.
\end{proof}

The rest of the lemmas in this section will construct oriented homomorphisms by a similar, yet distinct, greedy strategy.
This new approach will use $2$-dipath colouring in a similar manner to \cite{bradshaw2023injective,clow2023oriented}.

\begin{lemma}\label{Lemma: Small Graph are easy to "(k,d,N)-colour"}
    If $G = (V,E)$ is an oriented graph of order $n \leq 6g$, then $G$ has an injective oriented homomorphism $\phi$ to an oriented graph $H$ which contains $H_g$ as a spanning subgraph, such that the image of $\phi$ is a subset of $P_0$.
\end{lemma}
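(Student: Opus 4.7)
The plan is to observe that the conclusion essentially asks us to realise $G$ as an oriented subgraph living inside $P_0$, and then simply take $H$ to be $H_g$ together with this copy of $G$. The key point that makes this work is that $H_g$, by construction, contains \emph{no} edges between vertices of $P_0$ (we deleted all edges of $K^{(g)}[P_{138g-161} \cup \cdots \cup P_{144g-162}]$), so we have total freedom to install an oriented graph on any subset of $P_0$ without violating the oriented-graph condition or disturbing $H_g$ as a spanning subgraph.

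Concretely, first I would observe that $P_0$ is a union of $6g$ partite sets, each of size $N \geq 1$, so $|P_0| = 6gN \geq 6g \geq n$. Hence there exists an injection $\phi : V(G) \to P_0$. Let me fix any such $\phi$.

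Next I would define the target oriented graph $H$ by taking the vertex set $V(H_g)$, the edge set
$$E(H) = E(H_g) \cup \{(\phi(u), \phi(v)) : (u,v) \in E(G)\},$$
and verify that $H$ is an oriented graph. The added edges all lie within $\phi(V(G)) \subseteq P_0$; since $H_g$ has no edges between vertices of $P_0$, these added edges are disjoint from $E(H_g)$, and since $G$ is itself an oriented graph, the added edge set contains no pair of oppositely oriented edges. Thus $H$ is an oriented graph, and $H_g$ is (by construction) a spanning subgraph of $H$.

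Finally I would check that $\phi$ is an oriented homomorphism. This is immediate from the definition of $E(H)$: every edge $(u,v) \in E(G)$ gives rise to an edge $(\phi(u), \phi(v)) \in E(H)$, and $\phi$ was chosen to be injective, with image in $P_0$ by construction. There is no real obstacle here --- the lemma is essentially a setup statement that exploits the absence of edges within $P_0$ in $H_g$, and the main thing to check is the size inequality $n \leq 6g \leq |P_0|$ together with the oriented-graph compatibility of the edge addition.
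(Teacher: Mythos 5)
Your proof is correct and follows essentially the same route as the paper's: observe that $P_0$ is a union of $6g$ partite sets of size $N$, hence $|P_0| = 6gN \geq 6g \geq n$; pick an injection into $P_0$; and form $H$ by adding a copy of the edges of $G$ on the image, which is compatible with $H_g$ precisely because $H_g[P_0]$ has no edges.
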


\begin{proof}
    Let $G = (V,E)$ be an oriented graph of order $n \leq 6g$. 
    Recall that $H_g[P_0]$ is an oriented graph with no edges and $|P_0| \geq 6gN \geq 6g$.
    Let $\{u_1,\dots, u_{6g}\} \subseteq P_0$ and let $v_1,\dots, v_n$ be a fixed ordering of the vertices in $G$.

    Let $\phi: V \rightarrow V(H_g)$ such that for all $1\leq i \leq n \leq 6g$, $\phi(v_i)=u_i$. Then $\phi$ is an injective function and the image of $\phi$ is a subset of $P_0$. Form the graph $H$ by adding edges $(u_i,u_j)$ to $H_g$ for all edges $(v_i,v_j) \in E$. As $G$ is an oriented graph and $H_g[P_0]$ is an oriented graph with no edges the resulting graph $H$ is an oriented graph.

    Thus, $H$ is an oriented graph which contains $H_g$ as a spanning subgraph and $\phi$ is an injective oriented homomorphism from $G$ to $H$, such that 
    the image of $\phi$ is a subset of $P_0$.
    This completes the proof.
\end{proof}

The following lemmas bound the $2$-dipath chromatic number of oriented graphs with bounded maximum degree and degeneracy.
This will be critical to proving Lemma~\ref{Lemma: smallest counterexample large max degree}.
We note that Lemma~\ref{Lemma: smallest counterexample large max degree} is the key observation that will allow us to find a contradiction during the proof of Theorem~\ref{Thm: Colouring Upper Bound}.

\begin{lemma}\label{Lemma: 2-Diapth Max Degree & Degen Upper Bound}
    If $G = (V,E)$ is an oriented graph with maximum degree $\Delta$ and degenerate $d$, then 
    $$
    \chis(G) \leq 2d\Delta - \Delta - d^2 + d + 1.
    $$
\end{lemma}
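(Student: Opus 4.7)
The plan is to greedily $2$-dipath colour $G$ using a standard degeneracy ordering. Since $G$ is $d$-degenerate, fix an ordering $v_1, \dots, v_n$ of $V$ in which each $v_i$ has at most $d$ neighbours in the back-set $S_i = \{v_1, \dots, v_{i-1}\}$. I will colour the vertices in this order. When I reach $v = v_i$, a colour is forbidden at $v$ exactly when it is used by some already-coloured $u \in S_i$ with $\dist(v,u) \leq 2$ or $\dist(u,v) \leq 2$; the whole task is to show that the number of such $u$ is at most $2d\Delta - \Delta - d^2 + d$, whence a palette of $2d\Delta - \Delta - d^2 + d + 1$ colours always leaves one free.

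Partition the neighbourhood of $v$ into the back-neighbours $B = N(v) \cap S_i$ and the forward-neighbours $F = N(v) \setminus S_i$; by degeneracy $|B| \leq d$, and $|B| + |F| = \deg(v) \leq \Delta$. Every forbidden $u \in S_i$ either lies in $B$, or lies at the far end of a $2$-dipath (in either direction) through some $w \in N(v)$, and I split the second count by whether $w \in B$ or $w \in F$. If $w \in B$, then $u$ can be any neighbour of $w$ distinct from $v$, giving at most $\Delta - 1$ candidates per $w$. If $w \in F$, then $w \notin S_i$, so any valid $u \in S_i$ must be a back-neighbour of $w$ other than $v$; since $v$ itself already occupies one of the $\leq d$ back-neighbour slots of $w$, at most $d - 1$ such $u$ exist per $w$.

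Summing, the number of forbidden colours at $v$ is at most
\[
|B| + |B|(\Delta - 1) + |F|(d - 1) \;=\; |B|\Delta + |F|(d - 1).
\]
Under the constraints $|B| \leq d$ and $|B| + |F| \leq \Delta$ this linear function is maximised at $|B| = d$, $|F| = \Delta - d$, yielding $d\Delta + (\Delta - d)(d - 1) = 2d\Delta - \Delta - d^2 + d$, which is exactly the claimed upper bound minus one.

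The hard part, I expect, will be the forward-neighbour case: the argument really relies on the fact that in a degeneracy ordering, $v$ already uses up one of the $\leq d$ back-slots of any forward-neighbour $w$, and it is this ``$-1$'' that produces the coefficient $d - 1$ (rather than $d$) and so drives the $-d^2 + d$ correction in the final expression. Everything else is routine greedy bookkeeping; in particular, the orientations of individual edges need never be tracked, since the coarse bounds ``at most $\Delta - 1$ or $d - 1$ candidates per $w$'' absorb both in- and out-$2$-dipaths simultaneously.
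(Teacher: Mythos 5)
Your proof is correct and takes essentially the same route as the paper: a degeneracy ordering, a greedy colouring in that order, and the identical count of forbidden vertices split into back-neighbours ($\leq d$), vertices at distance two through a back-neighbour ($\leq |B|(\Delta-1)$), and vertices at distance two through a forward-neighbour ($\leq |F|(d-1)$), with the same linear maximisation at $|B|=d$, $|F|=\Delta-d$. The only stylistic difference is that the paper maintains an explicit ``future-proofing'' invariant (already-coloured common neighbours of any uncoloured vertex must receive distinct colours), whereas you achieve the same effect implicitly by measuring directed distance in the full graph $G$ rather than in the coloured prefix, so that when two vertices later become endpoints of a $2$-dipath through a not-yet-coloured vertex, the later-coloured of the two already avoided the earlier one.
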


\begin{proof}
    Let $G = (V,E)$ be a graph with maximum degree $\Delta$ and degenerate $d$. By the definition of the degenerate, there exists a vertex ordering $v_1,\dots,v_n$ of $V$, such that $|N(v_i) \cap \{v_1,\dots, v_{i-1}\}|\leq d$ for all $1 \leq i \leq n$. 
    Fix $i$ and suppose without loss of generality that $k = |N(v_i) \cap \{v_1,\dots, v_{i-1}\}|$.
    This implies that there are at most $k(\Delta-1)$ vertices $v_j$, 
    such that $v_j$ is a neighbour of a vertex $v_t \in N(v_i) \cap \{v_1,\dots, v_{i-1}\}$.
    Furthermore, there are at most $(d-1)(\Delta-k)$ vertices $v_j$, $j < i$, such that $v_j$ is a neighbour of a vertex $v_q \in N(v_i) \cap \{v_{i+1},\dots, v_{n}\}$.
    Thus, $v_i$ has at most $k(\Delta-1) + (d-1)(\Delta-k) = (d+k)\Delta-\Delta-dk$ vertices at undirected distance $2$ in $\{v_1,\dots, v_{i-1}\}$.

    Now we will colour $G$ greedily as follows. Suppose 
    $$\phi:\{v_1,\dots, v_{i-1}\} \rightarrow \{1,\dots, 2d\Delta - \Delta - d^2 + d + 1\}$$ 
    is a $2$-dipath colouring of $G[\{v_1,\dots, v_{i-1}\}]$ such that for all vertices $v_j$ where $j\geq i$, if $v_t$ and $v_r$ are both neighbours of $v_j$ and $r,t<i$, then $\phi(v_r)\neq \phi(v_t)$. 
    We will prove that we can extend $\phi$ to colour $v_i$ so that it remains a $2$-dipath colouring and satisfies this additional constraint.

    By assumption, for all $v_t,v_r \in N(v_i)\cap \{v_1,\dots, v_{i-1}\}$, $\phi(v_t)\neq \phi(v_r)$. Hence, adding $v_i$ to $G[\{v_1,\dots, v_{i-1}\}]$ will not create any $2$-dipaths between vertices of the same colour under $\phi$. Thus, if $v_i$ receives a colour that is distinct from any of the already coloured vertices, that is vertices in $\{v_1,\dots, v_{i-1}\}$, at undirected distance $1$ or $2$ from $v_i$ then this extended version of $\phi$ satisfies our assumptions.

    Recall that $v_i$ has $k \leq d$ already coloured neighbours and at most $(d+k)\Delta-\Delta-dk$ already coloured vertices at undirected distance $2$. Hence, if there are at least $\ell$ colours available to colour $v_i$, where
    $$
    \ell > (d+k)\Delta-\Delta-dk + k
    $$
    then there exists a colour we can assign $v_i$ to extend $\phi$ to colour $G[\{v_1,\dots, v_{i}\}]$. By taking a first derivative in $k$ and recalling that $\Delta\geq d$, we can see that $(d+k)\Delta-\Delta-dk + 2k$ is maximised when $k = d$. Hence, if we have at least $\ell$ colours where 
    $$
    \ell > 2d\Delta-\Delta-d^2 + d
    $$
    then there exists a colour we can assign $v_i$ to extend $\phi$. As we have assumed we have at least $2d\Delta-\Delta-d^2 + d+1$ colours we can extend our colouring $\phi$ to $v_i$. As $v_i$ was chosen without loss of generality the result follows by induction.
\end{proof}

\begin{lemma}\label{Lemma: Minus E(U) 2-dipath}
Let $G=(V,E)$ be an oriented graph and let $U \subseteq V$. Then, $$\chis(G) \leq |U| + \chis(G - E(G[U])).$$
\end{lemma}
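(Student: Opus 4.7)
The plan is a direct greedy construction. Start with an optimal $2$-dipath colouring $\psi$ of $G - E(G[U])$ using a palette $C$ of size $\chis(G - E(G[U]))$. Then introduce a fresh palette $C'$ of $|U|$ new colours (disjoint from $C$), fix an arbitrary bijection between $U$ and $C'$, and define $\phi:V\to C\cup C'$ by assigning each vertex of $U$ its unique colour from $C'$ and each vertex of $V\setminus U$ its $\psi$-colour. The total number of colours used is $|U|+\chis(G - E(G[U]))$, so it suffices to verify that $\phi$ is a $2$-dipath colouring of $G$.

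To check that $\phi$ is a proper colouring, take any edge of $G$. If both endpoints lie in $U$ they receive distinct colours from $C'$; if exactly one endpoint lies in $U$ the colours come from the disjoint palettes $C$ and $C'$; and if neither endpoint lies in $U$ the edge survives in $G - E(G[U])$ and so $\psi$, hence $\phi$, assigns distinct colours to its endpoints.

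To check the $2$-dipath condition, consider any directed path $u\to v\to w$ in $G$. If $u\in U$ and $w\in U$, then $\phi(u)\ne\phi(w)$ because all vertices of $U$ receive distinct colours. If exactly one of $u,w$ lies in $U$, then $\phi(u)$ and $\phi(w)$ lie in the disjoint palettes $C$ and $C'$. The only remaining case is $u,w\notin U$. In this case neither of the edges $(u,v)$ and $(v,w)$ has both endpoints in $U$ (since $u$ and $w$ are outside $U$), so neither edge belongs to $E(G[U])$. Hence $u\to v\to w$ is still a directed $2$-dipath in $G - E(G[U])$, and the $2$-dipath property of $\psi$ gives $\phi(u)=\psi(u)\ne\psi(w)=\phi(w)$.

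The argument is essentially a case check; there is no real obstacle. The only mildly delicate point, and the one worth flagging explicitly in the write-up, is the last case: even when the middle vertex $v$ of a $2$-dipath lies in $U$, the two incident edges are not deleted by removing $E(G[U])$, so the inductive use of $\psi$ on $G - E(G[U])$ remains legitimate. Once this observation is in place the bound $\chis(G)\leq |U|+\chis(G - E(G[U]))$ follows immediately.
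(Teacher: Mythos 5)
Your proof is correct and takes essentially the same approach as the paper: both constructions give the vertices of $U$ fresh, pairwise distinct colours disjoint from the palette used on $G - E(G[U])$, and both rely on the observation that any edge with at most one endpoint in $U$ survives the deletion. The only difference is presentational---you verify the $2$-dipath condition directly by cases while the paper argues by contradiction---and your explicit remark about $2$-dipaths whose middle vertex lies in $U$ is a nice clarification of a point the paper treats somewhat tersely.
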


\begin{proof}
   Let $G=(V,E)$ be an oriented graph, let $U = \{u_1,\dots, u_{|U|}\} \subseteq V$, and let $G' = G - E(G[U])$. 
   Suppose $\phi:V \rightarrow \{1,\dots, \chis(G')\}$ is a $2$-dipath colouring of $G'$. 
   We claim that $\psi: V \rightarrow \{1,\dots, |U|+\chis(G')\}$ defined by
   \begin{itemize}
       \item for $u_i\in U$, $\psi(u_i) = \chis(G')+i$, and
       \item for $w \in V\setminus U$, $\psi(w) = \phi(w)$,
   \end{itemize}
   is a $2$-dipath colouring of $G$. 

   For the sake of contradiction suppose $\psi$ is not a $2$-dipath colouring of $G$. 
   Then, there exists vertices $v,w \in V$ such that $1\leq \dist(v,w)\leq 2$ in $G$ and $\psi(v)=\psi(w)$.
   Suppose $v,w$ is such a pair of vertices.
   
   If $v,w\in V\setminus U$, then $\phi(v) = \psi(v) = \psi(w) = \phi(w)$.
   As $\phi$ is a $2$-dipath colouring of $G'$, this implies $\dist(v,w) > 2$ in $G'$.
   But this is a contradiction as $v,w\in V\setminus U$ implies that $v$ and $w$ have the same neighbours in $G'$ as in $G$.
   Hence, $\phi(v) = \psi(v) \neq \psi(w) = \phi(w)$ a contradiction.

   Suppose then that $v \in U$. 
   Then $v$ is the only vertex receiving colour $\psi(v)$ implying that if $\psi(v) = \psi(w)$, then $v=w$. 
   But this is a contradiction as $\dist(v,w)\geq 1$ in $G'$. This completes the proof.
\end{proof}

The following lemma was proven in \cite{bradshaw2023injective}.
For completeness a proof is included.

\begin{lemma}[Lemma~3.5 \cite{bradshaw2023injective}]\label{Lemma: Large Min Degree}
    Let $k \geq 1$ be an integer.
    If $G = (V,E)$ is a graph of Euler genus at most $g \geq 2$ and minimum degree at least $k + 6$, then $G$ has fewer than $\frac{6g}{k}$ vertices.
\end{lemma}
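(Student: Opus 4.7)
The plan is to prove this by combining the standard edge bound for graphs of bounded Euler genus with a degree-sum lower bound on the number of edges. This is exactly the kind of statement that Euler's formula gives directly, so I expect the proof to be short and essentially free of obstacles.

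First I would recall (or derive from Euler's formula applied to a $2$-cell embedding in a surface of Euler genus at most $g$, together with the fact that each face has length at least $3$) the standard inequality
\[
|E| \leq 3|V| + 3g - 6
\]
for any simple graph $G$ of Euler genus at most $g \geq 2$. The derivation is the usual one: $|V| - |E| + |F| \geq 2 - g$, combined with $2|E| \geq 3|F|$, which rearranges to the bound above.

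Next I would use the minimum degree hypothesis. Since $\delta(G) \geq k+6$, the handshake lemma gives
\[
2|E| = \sum_{v \in V} \deg(v) \geq (k+6)|V|.
\]
Combining the two inequalities yields $(k+6)|V| \leq 6|V| + 6g - 12$, i.e.\ $k|V| \leq 6g - 12 < 6g$, so $|V| < \tfrac{6g}{k}$, as desired.

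The only mild subtlety is ensuring the edge bound $|E| \leq 3|V| + 3g - 6$ is valid for all $g \geq 2$ and for graphs embedded in non-orientable surfaces as well; this is why the statement quotes Euler \emph{genus} rather than orientable genus. The bound does hold in this generality (the Euler characteristic formulation $|V| - |E| + |F| \geq 2 - g$ is valid for any $2$-cell embedding on a surface of Euler genus $g$, and one may assume without loss of generality that the embedding realising the Euler genus of $G$ is a $2$-cell embedding after adding edges if necessary, which only strengthens the inequality for the original graph). With that noted, the argument is complete.
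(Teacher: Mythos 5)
Your proof is correct and uses essentially the same approach as the paper: Euler's formula gives the linear edge bound $|E| \leq 3|V| + 3g - 6$, and combining this with the degree-sum bound from the handshake lemma yields the claim. The paper phrases the final step slightly differently (rewriting the combined inequality as $\sum_{v\in V}(\deg(v)-6) < 6g$ and counting terms), but this is purely cosmetic.
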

\begin{proof}
    Let $G = (V,E)$ be a graph of Euler genus at most $g \geq 2$ and $\delta(G) \geq k + 6$.
    By Euler's formula and the Handshaking lemma, $|V| - \frac{1}{3}|E| > -g$. Rearranging this,
    \[\sum_{v \in V} (\deg(v) - 6) < 6g.\]
    If each vertex has degree at least $k + 6$, then the number of terms in this sum is less than $\frac{6g}{k}$, completing the proof.
\end{proof}

The next step is to prove a smallest counterexample has large $2$-dipath chromatic number

\begin{lemma}\label{Lemma: H'-colouring, 2-dipath<= k, t-degen}
If $G = (V,E)$ is a smallest counterexample, then $\chis(G) > 138g -162$.
\end{lemma}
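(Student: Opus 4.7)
The plan is to prove the contrapositive by an explicit construction. Suppose $G$ is a smallest counterexample with $\chis(G) \leq 138g - 162$, and fix a $2$-dipath $k$-colouring $\phi': V \to \{1, \ldots, k\}$ with $k \leq 138g - 162$, which I will regard as assigning each vertex to one of the partite sets $P_1, \ldots, P_{138g - 162}$ of $H_g$. I will build an oriented homomorphism from $G$ to some oriented graph $H \supseteq H_g$ (as a spanning subgraph), contradicting $G$ being a counterexample.

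First I will produce a degeneracy ordering of $V$: iteratively delete a vertex of minimum degree from the current subgraph to obtain $v_1, \ldots, v_n$, and let $w_i = v_{n-i+1}$, so that $w_1, w_2, \ldots, w_n$ is the reverse of the deletion order. By construction the number of earlier-placed neighbors of $w_i$ (i.e.\ its neighbors in $\{w_1, \ldots, w_{i-1}\}$) equals the minimum degree of the induced subgraph on $\{w_1, \ldots, w_i\}$. Since this subgraph has Euler genus at most $g$, Euler's formula bounds its edge count by $3i + 3g - 6$ (for $i \geq 3$), so its minimum degree is at most $6 + (6g-12)/i$. Hence whenever $i \geq \lceil 1.5g - 3 \rceil$, the vertex $w_i$ has at most $10$ earlier-placed neighbors.

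Next, set $n' = \lceil 1.5g - 3 \rceil - 1$ and $U = \{w_1, \ldots, w_{n'}\}$; then $|U| < 1.5g \leq 6g$, so Lemma~\ref{Lemma: Small Graph are easy to "(k,d,N)-colour"} applied to $G[U]$ yields an injective oriented homomorphism $\phi_0$ from $G[U]$ to some oriented graph $H_1 \supseteq H_g$ whose image lies in $P_0$. I will then extend $\phi_0$ to all of $V$ by processing $w_{n'+1}, \ldots, w_n$ in order. When placing $w_i$, let $A$ be the image set (under the partial map defined so far) of the earlier-placed neighbors of $w_i$; by the previous paragraph $|A| \leq 10$, and since $\phi'$ is a proper colouring no neighbor of $w_i$ has colour $\phi'(w_i)$, so $A \cap P_{\phi'(w_i)} = \emptyset$. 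Applying Lemma~\ref{Lemma: full H} to $H_1$ then produces $s \in P_{\phi'(w_i)}$ such that $F(A, s, H_1)$ equals the orientation vector prescribed by the edges of $G$ between $w_i$ and its earlier-placed neighbors, and I set $\phi(w_i) = s$.

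A routine verification shows $\phi$ is an oriented homomorphism from $G$ to $H_1$: edges inside $G[U]$ are handled by Lemma~\ref{Lemma: Small Graph are easy to "(k,d,N)-colour"}, and every other edge $u w_i$ (with $w_i$ placed after $u$) is oriented correctly by the greedy choice of $\phi(w_i)$. Since $H_1$ contains $H_g$ as a spanning subgraph, this contradicts $G$ being a counterexample, proving $\chis(G) > 138g - 162$. The main subtlety is choosing the cut-point $n'$: it must be small enough that Lemma~\ref{Lemma: Small Graph are easy to "(k,d,N)-colour"} applies to $G[U]$ (requiring $|U| \leq 6g$), yet large enough that every $w_i$ with $i > n'$ has at most the $d = 10$ earlier-placed neighbors demanded by the $(k,d,N)$-full property of $K^{(g)}$ via Lemma~\ref{Lemma: full H}. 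Euler's formula makes $n' \sim 1.5g$ the natural threshold, and the constant $138g - 162$ is calibrated so that all $k$ colour classes can be accommodated by the "full" part of $H_g$.
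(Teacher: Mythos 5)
Your proposal follows essentially the same route as the paper's proof: take a degeneracy ordering, embed a short initial prefix injectively into $P_0$ via Lemma~\ref{Lemma: Small Graph are easy to "(k,d,N)-colour"}, and greedily extend, placing each later vertex $w_i$ in the partite set $P_{\phi'(w_i)}$ determined by its $2$-dipath colour, using Lemma~\ref{Lemma: full H} at each step. The differences are cosmetic: you bound the back-degree directly from Euler's formula, obtaining a cut-point near $1.5g$ with back-degree at most $10$, while the paper invokes Lemma~\ref{Lemma: Large Min Degree} to use cut-point $6g$ with back-degree at most $6$; both choices fit inside the $(144g-162,10,N)$-full budget of $K^{(g)}$.

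There is, however, one genuine gap. When you assert that Lemma~\ref{Lemma: full H} produces $s$ with $F(A,s,H_1)$ equal to ``the orientation vector prescribed by the edges of $G$ between $w_i$ and its earlier-placed neighbors,'' you are tacitly assuming this vector is well-defined. If two earlier-placed neighbours $u,u'$ of $w_i$ were mapped to the \emph{same} vertex of $H_1$ but had \emph{opposite} orientations with respect to $w_i$, then no vector $\vec v$ and hence no $s$ could satisfy the demand, and the greedy step would fail. Your argument uses only the proper-colouring consequence of $\phi'$ (to get $A \cap P_{\phi'(w_i)} = \emptyset$) and never the $2$-dipath property, which is precisely what is needed here: for $u,u'\notin U$ with a $2$-dipath $u,w_i,u'$, the $2$-dipath property forces $\phi'(u)\neq\phi'(u')$ and hence different partite sets; for $u,u'\in U$, injectivity of the prefix embedding suffices; and for the mixed case, disjointness of $P_0$ from the other $P_j$ suffices. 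The construction does satisfy all of this, so the gap is easily filled, but as written your proof omits the very step that uses the hypothesis $\chis(G) \le 138g-162$ (as opposed to a mere proper-colouring bound) and that makes the greedy extension sound.
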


\begin{proof}
    Let $G = (V,E)$ be a smallest counterexample such that $\chis(G) \leq 138g-162$
    and let $v_1,\dots, v_n$ be a vertex order of $G$ such that for all $1\leq i \leq n$, $v_i$ is a vertex with minimum degree in $G[\{v_1,\dots, v_i\}]$.
    As  $\chis(G) \leq 138g-162$, there exists a $2$-dipath $(138g-162)$-colouring of $G$.
    Let $\psi: V \rightarrow \{1,\dots, 138g-162\}$ be a $2$-dipath colouring of $G$.

    By Lemma~\ref{Lemma: Small Graph are easy to "(k,d,N)-colour"} there exists an injective oriented homomorphism $\phi$ from $G[\{v_1,\dots, v_{6g}\}]$ to a graph $H$ which contains $H_g$ as a spanning subgraph, such that the image of $\phi$ is a subset of $P_0$.
    For all $6g \leq i \leq n$ we will construct an oriented homomorphisms $\phi_i$ from $G[\{v_1,\dots, v_i\}]$ to $H$ such that
    \begin{enumerate}
        \item if $j \leq 6g$, $\phi_i(v_j) \in P_0$ and for all $q \neq j$, $\phi_i(v_j) \neq \phi_i(v_q)$, and
        \item if $j > 6g$, then $\phi_i(v_j) \in P_{\psi(v_j)}$.
    \end{enumerate}
    This is well defined as for all $1\leq i \leq 138g-162$ there exists an independent set $P_i$ in $H$.
    Observe that as $\phi$ is injective and the image of $\phi$ is a subset of $P_0$, $\phi$ is a valid choice of $\phi_{6g}$. 
    Let $\phi_{6g} = \phi$, suppose $i > 6g$, and suppose $\phi_{i-1}$ has already been defined to satisfy properties (1) and (2).

    We now define $\phi_i$.
    For all $j < i$, let $\phi_i(v_j)=\phi_{i-1}(v_j)$.
    Observe that by Lemma~\ref{Lemma: Large Min Degree} $|N(v_i) \cap \{v_1,\dots, v_{i-1}\}| \leq 6$ as we have assumed $v_i$ is a vertex of minimum degree in $G[\{v_1,\dots, v_i\}]$.
    Let $A = \{a_1,\dots, a_{|A|}\}$ be an ordering of $\{\phi_i(u): u \in N(v_i)\cap \{v_1,\dots, v_{i-1}\}\}$.
    Then $|A|\leq 6$.

    Observe that if $v_{j},v_i,v_{q}$ are a $2$-dipath in $G$, $\psi(v_j)\neq \psi(v_q)$.
    Hence, assumptions (1) and (2) about $\phi_{i-1}$ implies that if $j,q < i$ and $v_{j},v_i,v_{q}$ for a $2$-dipath in $G$, then $\phi_{i-1}(v_j)\neq \phi_{i-1}(v_q)$.
    Thus, if $j,q < i$ and $v_{j},v_i,v_{q}$ for a $2$-dipath in $G$, then $\phi_{i}(v_j)\neq \phi_{i}(v_q)$.
    That is, vertices with different orientations to $v_i$ will receive different colours in $\phi_i$.

    As vertices with different orientations to $v_i$ receive different colours, the vector $\vec{v} \in \{-1,1\}^{|A|}$ with $x^{th}$ entry $1$ if $(v_i,u)\in E$, where $\phi(u) = a_x$, and $x^{th}$ entry $-1$ otherwise, is well defined. 
    Notice that if there exists an $s \in P_{\psi(v_i)}$ such that $\vec{v} = F(A,s,H)$, then letting $\phi_i(v_i)=s$ results in an oriented homomorphism $\phi_i$ from $G[\{v_1,\dots, v_i\}]$ to $H$ satisfying assumption (1) and (2).

    As assumption (1) and (2) are true for $\phi_{i-1}$, and $\phi_i(v_j) = \phi_{i-1}(v_j)$ 
    for each vertex $u \in N(v) \cap \{v_1,\dots, v_{i-1}\}$, 
    $\psi(u) \neq \psi(v_i)$ implies that $\phi(u) \notin P_{\psi(v_i)}$ for all $u \in N(v) \cap \{v_1,\dots, v_{i-1}\}$.
    Hence, $A \cap P_{\psi(v_i)} = \emptyset$ and $|A|\leq 6$.
    Thus, Lemma~\ref{Lemma: full H} implies that there exists a vertex $s \in P_{\psi(v_i)}$ such that $\vec{v} = F(A,s,H)$.
    Let $\phi_i(v_i)  = s$ for such a vertex $s$.
    Then, $\phi_i$ is an oriented homomorphism from $G[\{v_1,\dots, v_i\}]$ to $H$ satisfying assumption (1) and (2).
    
    Therefore, by induction for all $6g \leq i \leq n$ there exists oriented homomorphisms $\phi_i$ from $G[\{v_1,\dots, v_i\}]$ to $H$.
    But $G = G[\{v_1,\dots, v_n\}]$, so we have proven there exists an oriented homomorphism from $G$ to an oriented $H$ where $H_g$ is a spanning subgraph of $H$.
    This contradicts our assumption that $G$ is a counterexample, 
    so we conclude that in any smallest counterexample $G$, $\chis(G)> 138g-162$.
\end{proof}

\begin{lemma}\label{Lemma: smallest counterexample large max degree}
If $G=(V,E)$ is a smallest counterexample,
then $\Delta(G) > 12g-12$.
\end{lemma}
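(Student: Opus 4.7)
The plan is to argue by contradiction: assume $\Delta(G) \leq 12g - 12$ and derive $\chis(G) \leq 138g - 162$, contradicting Lemma~\ref{Lemma: H'-colouring, 2-dipath<= k, t-degen}.

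The key move will be to apply Lemma~\ref{Lemma: Minus E(U) 2-dipath} with $U$ equal to the $7$-core of $G$, namely the maximal subgraph of minimum degree at least $7$, obtained by iteratively removing vertices of degree at most $6$ from $G$. Since $G[U]$ has Euler genus at most $g$ and minimum degree at least $7 = 1 + 6$, Lemma~\ref{Lemma: Large Min Degree} applied with $k = 1$ yields $|U| \leq 6g - 1$. Writing $G' := G - E(G[U])$, Lemma~\ref{Lemma: Minus E(U) 2-dipath} reduces the task to proving $\chis(G') \leq 132g - 161$.

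To bound $\chis(G')$ we will run greedy $2$-dipath colouring under the ordering that first processes the vertices of $U$ (in any order) and then processes the peel $V \setminus U$ in reverse peeling order: if $w_1, \dots, w_{n-m}$ are the peel vertices listed in the order they are removed (each of degree at most $6$ in the current graph at its moment of removal), they are coloured in $G'$ as $w_{n-m}, w_{n-m-1}, \dots, w_1$. Two structural facts will drive the forbidden-colour count: (i) for every peel vertex $w_j$ we have $|N_G(w_j) \cap (U \cup \{w_{j+1}, \dots, w_{n-m}\})| \leq 6$ by the peel condition; and (ii) every $u \in U$ satisfies $|N_G(u) \cap U| \geq 7$ since $U$ is the $7$-core, so $\deg_{G'}(u) \leq \Delta(G) - 7$.

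Bounding the $2$-dipath forbidden set at each vertex by the number of already-processed vertices at undirected distance at most $2$ (an over-estimate that remains valid since directed distance implies undirected distance), fact (ii) gives at most $5(\Delta - 7)$ forbidden colours for each $u \in U$: its back-degree in $G'$ is $0$ as $U$ is independent in $G'$, and each of its at most $\Delta - 7$ forward-peel neighbours $z$ contributes at most $|N_G(z) \cap U \setminus \{u\}| \leq 5$ by (i). Fact (i) gives at most $11\Delta - 30$ forbidden colours for each peel vertex $v = w_j$: its back-degree is at most $6$, each back-neighbour contributes at most $\Delta - 1$ distance-$2$ already-processed vertices, and each of its forward-peel neighbours contributes at most $5$ by applying (i) to that neighbour (with $v$ itself being one of its at most six back-neighbours). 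Therefore $\chis(G') \leq 11\Delta - 29 \leq 132g - 161$, giving $\chis(G) \leq (6g - 1) + (132g - 161) = 138g - 162$, contradicting Lemma~\ref{Lemma: H'-colouring, 2-dipath<= k, t-degen}. The hard part will be keeping the distance-$2$ counting sharp: the constants $12g - 12$ and $138g - 162$ have been calibrated so the argument closes exactly, so one must exploit the saving from $v$ being a back-neighbour of any of its forward-peel neighbours to avoid going over budget.
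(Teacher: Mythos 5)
Your proof is correct and takes essentially the same route as the paper: carve out a set $U$ with $|U| \le 6g-1$ so that $G' = G - E(G[U])$ is $6$-degenerate, show $\chis(G') \le 11\Delta - 29 \le 132g-161$, and then combine with Lemma~\ref{Lemma: Minus E(U) 2-dipath} to contradict Lemma~\ref{Lemma: H'-colouring, 2-dipath<= k, t-degen}. Taking $U$ to be the $7$-core rather than the paper's length-$(6g-1)$ min-degree-last prefix, and re-deriving the greedy $2$-dipath bound by hand instead of invoking Lemma~\ref{Lemma: 2-Diapth Max Degree & Degen Upper Bound} with $d=6$ and $\Delta \le 12g-12$, are cosmetic variations that land on exactly the same constants (the sharper count $5\Delta-35$ you obtain for the vertices of $U$ is swallowed by the max with $11\Delta-30$, so invoking the lemma directly would streamline the write-up without losing anything).
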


\begin{proof}
Suppose $G$ is a smallest counterexample with $\Delta(G) \leq 12g-12$. Let $v_1,\dots, v_n$ be a vertex ordering of $G$ such that for all $i$, $v_i$ is a vertex of minimum degree in $G[\{v_1,\dots, v_i\}]$. Then, Lemma~\ref{Lemma: Large Min Degree} implies that for all $i \geq 6g$, $v_i$ has degree at most $6$ in $G[\{v_1,\dots, v_i\}]$. Hence, $G' = G - E[G[\{v_1,\dots, v_{6g-1}\}]]$ is a $6$-degenerate graph.

As $\Delta(G') \leq \Delta(G) \leq 12g-12$, Lemma~\ref{Lemma: 2-Diapth Max Degree & Degen Upper Bound} implies that
$$
\chis(G')\leq 11(12g-12) - 29 = 132g - 161.
$$
Then, Lemma~\ref{Lemma: Minus E(U) 2-dipath} implies $\chis(G) \leq \chis(G') + 6g - 1 \leq 138g -162$. 
But this contradicts $G$ being a smallest counterexample as Lemma~\ref{Lemma: H'-colouring, 2-dipath<= k, t-degen} implies every smallest counterexample has $2$-dipath chromatic number strictly larger than $138g -162$.
We concludes that if $G$ is a smallest counterexample, then $\Delta(G) > 12g-12$.
\end{proof}

Finally, we are prepared to prove Theorem~\ref{Thm: Colouring Upper Bound}.

\begin{proof}[Proof of Theorem~\ref{Thm: Colouring Upper Bound}]
Let $G=(V,E)$ be a smallest counterexample.
We will prove that $G$ does not exist via a discharging argument.

We assign initial charge to vertices, $\omega: V \rightarrow \mathbb{N}$ such that $\omega(v) = \deg(v)-6$ for all $v\in V$.
As $G$ has Euler genus at most $g$, Euler's formula and the Handshacking lemma imply that
$$
\sum_{v\in V} \omega(v) = \sum_{v\in V} (\deg(v) -6) \leq 6g-12.
$$

We now define a discharging procedure and let $\omega^*: V \rightarrow \mathbb{N}$ be the resulting charge of each vertex in $G$. 
The procedure has two rules.
If $w$ is a vertex adjacent to a vertex $v$ such that $\deg(v)=4$, then $w$ gives charge $1/2$ to $v$.
If $w$ is a vertex adjacent to a vertex $v$ such that $\deg(v)=5$, then $w$ gives charge $1/5$ to $v$.
We claim that for every vertex $v$, $\omega^*(v) \geq 0$.
Recall that by Lemma~\ref{Lemma: <=3-vertex} every vertex in $G$ has degree greater than or equal to $4$.

If $v$ is a vertex with degree $4$, then every neighbour of $v$ has degree at least $12$ by Lemma~\ref{Lemma: ajacent low degree vertices}. 
Hence, if $v$ is a vertex with degree $4$, then $v$ receives charge $4 \times \frac{1}{2}$ 
and gives $0$ charge. 
Then, if  $v$ is a vertex with degree $4$, $\omega^*(v) = 4 - 6 + 2 = 0$.

If $v$ is a vertex with degree $5$, then every neighbour of $v$ has degree at least $12$ by Lemma~\ref{Lemma: ajacent low degree vertices}. 
Hence, if $v$ is a vertex with degree $5$, then $v$ receives charge $5 \times \frac{1}{5}$
and gives $0$ charge. 
Then, if  $v$ is a vertex with degree $5$, $\omega^*(v) = 5 - 6 + 1 = 0$.

If $v$ is a vertex with degree $6 \leq k \leq 11$, then $v$ is not adjacent to any degree $4$ or degree $5$ vertices by Lemma~\ref{Lemma: ajacent low degree vertices}. Hence, if $v$ has degree $6 \leq k \leq 11$, then $v$ gives $0$ charge and receives $0$ charge. 
Then, if  $v$ is a vertex with degree $6 \leq k \leq 11$, $\omega^*(v) = \omega(v) \geq 6-6 = 0$.

If $v$ is a vertex with degree $k\geq 12$, then $v$ is adjacent to at most $k$ vertices of degree $4$ or degree $5$.
Hence, $v$ receives charge $0$ and gives at most $k \times \frac{1}{2}$ charge.
Then, if $v$ is a vertex with degree $k\geq 12$, $\omega^*(v)\geq k - 6 - \frac{k}{2} \geq \frac{k}{2} - 6 \geq 0$.

This implies that for every vertex $v\in V$, $\omega^*(v) \geq 0$ as desired.
Noticing that the total charge of the graph has not changed after our discharging procedure,
\begin{align*}
    \sum_{v\in V} \omega^*(v) & = \sum_{v\in V} \omega (v) \leq 6g-12.
\end{align*}
As each term on the
left hand side is non-negative and for all $k \geq 12$ if $\deg(v)=k$, then $\omega^*(v) \geq \frac{k}{2} - 6$, we conclude that 
\begin{align*}
    \frac{\Delta(G)}{2} - 6 & \leq 6g -12.
\end{align*}

Therefore, $\Delta(G) \leq 12g-12$. But this contradicts $G$ being a smallest counterexample given Lemma~\ref{Lemma: smallest counterexample large max degree} states that if $G$ is a smallest counterexample, then $\Delta(G) > 12g-12$. 
We conclude that no smallest counterexample exists, thereby proving Theorem~\ref{Thm: Colouring Upper Bound}.
\end{proof}

\section{Oriented Colouring Graphs on Surfaces}

In this section we will prove that every graph with Euler genus at most $g$ can be oriented coloured with $O(g\log(g))$ colours.
The formal statement we will prove is Theorem~\ref{Thm: Upper Bound}.
Given we have already shown Theorem~\ref{Thm: Colouring Upper Bound} in Section~4, all that remains to be proven for Theorem~\ref{Thm: Upper Bound} is the existence of the graph $H$, and a bound on the order of the graph $H$ from Theorem~\ref{Thm: Colouring Upper Bound}.
Recall that $H$ is constructed from a $(144g-162,10,N)$-full graph $K^{(g)}$, 
and recall that $V(H) = V(K^{(g)})$.

Like Lemma~\ref{Lemma: Large Min Degree} the following lemma was prove in \cite{bradshaw2023injective}. 
For completeness the proof from \cite{bradshaw2023injective} is included.
The proof is an application of the probabilistic method. 

\begin{lemma}[Lemma~5.1 \cite{bradshaw2023injective}]\label{Lemma: Existence of Full graph}
    For each $d\geq 2$ and $k\geq 5$, there exists a $(k,d,\lceil 8^d \log k \rceil)$-full graph.
\end{lemma}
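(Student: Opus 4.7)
\medskip

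\noindent\textbf{Proof proposal.} The plan is a standard probabilistic construction: start with the complete $k$-partite graph having $N = \lceil 8^d \log k \rceil$ vertices in each part and orient each edge independently and uniformly at random, then show that with positive probability the resulting oriented graph is $(k,d,N)$-full.

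First I would fix one potential obstruction: a choice of index $i \in [k]$, an ordered tuple $U = \{u_1,\dots,u_t\} \subseteq \bigcup_{j \neq i} P_j$ with $1 \leq t \leq d$, and a target sign vector $\vec{v} \in \{-1,1\}^t$. For a fixed $x \in P_i$, the random vector $F(U,x,H)$ equals $\vec{v}$ with probability exactly $2^{-t}$, and these events are independent across different $x \in P_i$ since the edges between $x$ and each $u_\ell$ are oriented independently for different $x$. Hence the probability that no vertex of $P_i$ realizes the pattern $\vec{v}$ on $U$ is
\[
(1-2^{-t})^N \;\leq\; \exp(-N \cdot 2^{-t}) \;\leq\; \exp(-N \cdot 2^{-d}).
\]

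Next I would union bound over all bad events. The number of choices of $i$ is $k$; for each $t \leq d$ the number of ordered tuples $U$ of size $t$ drawn from the $(k-1)N$ vertices outside $P_i$ is at most $(kN)^t$; and the number of sign vectors of length $t$ is $2^t$. Summing over $t$, the total number of bad events is at most
\[
k \sum_{t=1}^d (2kN)^t \;\leq\; k \, d \, (2kN)^d.
\]
It therefore suffices to show
\[
k \, d \, (2kN)^d \, \exp(-N/2^d) \;<\; 1,
\]
or equivalently, $N \geq 2^d\bigl(\log k + \log d + d\log(2kN)\bigr)$.

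Finally I would verify this inequality for the choice $N = \lceil 8^d \log k \rceil$. Since $N/2^d \geq 4^d \log k$, and $\log N \leq d\log 8 + \log\log k$ for $k \geq 5$, the right-hand side of the needed inequality is dominated by a term of the form $2^d\bigl(d\log k + 3d^2 \log 2 + d\log\log k + O(1)\bigr)$, while the left-hand side is at least $2^d \cdot 4^d \log k$; for $d \geq 2$ and $k \geq 5$ the factor $4^d$ on the left easily swallows the polynomial-in-$d$ overhead on the right, so the inequality holds. Consequently the probability that the random orientation fails to be $(k,d,N)$-full is strictly less than $1$, and a $(k,d,N)$-full graph exists. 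The only subtlety I expect is bookkeeping in the union bound constants, in particular making sure the crude estimate $(kN)^t$ for ordered tuples is valid and that the final arithmetic confirms the base $8$ (rather than a smaller base) really works down to $d=2$, $k=5$; this is routine but requires care.
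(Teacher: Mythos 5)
Your proposal is correct and follows essentially the same route as the paper: a uniformly random orientation of $K_{N,\dots,N}$, a union bound over all triples $(i,U,\vec{v})$, and a numeric verification that the failure probability is below $1$ when $N = \lceil 8^d \log k\rceil$. The one small divergence is that the paper observes it suffices to check only tuples of size exactly $d$ (since extending $U$ to size $d$ and $\vec v$ arbitrarily, a vertex realizing the extension also realizes the original), giving the cleaner bound $p \le k(kN)^d 2^d \exp(-2^{-d}N)$; your version union bounds over all $1\le t\le d$, costing an extra factor of about $d$ (or $\sim 2$ if you bound the geometric series), which does not affect the conclusion. Your hedging about the arithmetic is reasonable but the calculation does close; the paper checks it by showing $\log p < (d+1)\bigl((2-\tfrac{4^d}{d+1})\log k + (3d+1)\log 2\bigr)$ is decreasing in $k$ and negative at $k=5$ for all $d\ge 2$.
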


\begin{proof}
We let $N =  \lceil 8^d \log k \rceil $.
We let $H$ be a random orientation of the complete $k$-partite graph $K_{N,\dots,N}$ with partite sets $P_1,\dots, P_k$. 
Notice that if for all $i \in \{1,\dots, k\}$, 
all ordered subsets $U = \{u_1, \dots, u_d\}\subseteq \bigcup_{j \neq i} P_j$ of size $d$, 
and all vectors $\vec{v} \in \{-1,1\}^d$, 
there exists a $x \in P_i$ such that $F(U,x,G) = \vec{v}$,
then $H$ is $(k,d,N)$-full.

We consider a fixed value $i \in \{1,\dots, k\}$ and a
fixed ordered subset
$U = \{u_1, \dots, u_d\} \subseteq \bigcup_{j \neq i} P_j$, as well as a fixed vector $\vec{v} \in \{-1,1\}^d$.
The probability that a given vertex $x \in P_i$ satisfies $F(U,x,G) = \vec{v}$ is $2^{-d}$, so the probability that no vertex $v \in P_i$ satisfies $F(U,x,G) = \vec{v}$ is at most $(1-2^{-d})^N < \exp(-2^{-d} N)$. 
Therefore, taking a union bound over all possible values $i \in [k]$, all ordered subsets $U \subseteq \bigcup_{j \neq i} P_j$ of size $d$, and all vectors $\vec{v} \in \{-1,1\}^d$, the probability $p$ that $H$ is not $(k,d,N)$-full satisfies
\[ p \leq k \cdot (kN)^d 2^d \exp(-2^{-d}N).\]
The rest of the proof aims to show that $p < 1$. We observe that 
\begin{eqnarray*}
\log p &<& (d+1)(\log k + \log N + \log 2) - \frac{N}{2^d} \\
&= & (d+1) (\log k + \log
\lceil 8^d \log k \rceil + \log 2) - \frac{\lceil 8^d \log k \rceil }{2^d} \\ 
&< & (d+1)(2 \log k + \log 8^d + \log 2) - 4^d \log k \\
&=& (d+1) \left ( (2 - \frac{4^d}{d+1} ) \log k + (3d + 1) \log 2 \right )
\end{eqnarray*}
The $\log k$ term in the last expression has a negative coefficient for all $d \geq 2$, and therefore this expression is decreasing with respect to $k$.
 Hence, 
\[\log p < (d+1) \left ( (2 - \frac{4^d}{d+1} ) \log 5 + (3d + 1) \log 2 \right ),\]
which is negative for all $d \geq 2$.
Therefore, $p < 1$, and thus with positive probability, the oriented graph $H$ which we have constructed is $(k,d, \lceil 8^d \log k \rceil )$-full.
\end{proof}

\begin{theorem}\label{Thm: Upper Bound}
    For all integers $g\geq 2$,
    $$
    \chio(g) \leq 2^{40} g \log(g).
    $$
\end{theorem}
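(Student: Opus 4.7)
The plan is to combine Theorem~\ref{Thm: Colouring Upper Bound} with Lemma~\ref{Lemma: Existence of Full graph}, then carry out a numerical bound. By Theorem~\ref{Thm: Colouring Upper Bound}, every oriented graph $G$ of Euler genus at most $g$ admits an oriented homomorphism to some oriented graph $H$ containing $H_g$ as a spanning subgraph. Since $H_g$ is built on $V(K^{(g)})$, so is $H$, and hence
\[\chio(G) \leq |V(H)| = |V(K^{(g)})| = (144g - 162)\,N,\]
where $N$ is the smallest integer for which a $(144g - 162, 10, N)$-full graph exists. It therefore suffices to bound $N$ (which simultaneously shows that $K^{(g)}$, and hence $H_g$, actually exists).

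Next, I would apply Lemma~\ref{Lemma: Existence of Full graph} with $d = 10$ and $k = 144g - 162$. The hypotheses $d \geq 2$ and $k \geq 5$ both hold, because $g \geq 2$ forces $144g - 162 \geq 126$. The lemma then produces a $(144g - 162, 10, \lceil 8^{10}\log(144g - 162)\rceil)$-full graph, so by the minimality of $N$ we obtain $N \leq \lceil 8^{10}\log(144g - 162)\rceil$. Using the identity $8^{10} = 2^{30}$, this yields
\[\chio(g) \leq (144g - 162)\bigl\lceil 2^{30}\log(144g - 162)\bigr\rceil.\]

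The remaining step is to verify that the right-hand side above is at most $2^{40} g \log g$ for every $g \geq 2$. Using $\lceil x\rceil \leq x + 1$ and $144g - 162 \leq 144g$, the right-hand side is at most $2^{30}(144g - 162)\log(144g - 162) + 144g$. A direct check shows that the ratio $(144g - 162)\log(144g - 162)/(g\log g)$ is maximised at small values of $g$ (peaking near $g = 3$ at roughly $460$) and stays below $2^{9} = 512$ for every $g \geq 2$, so the main summand is at most $2^{39} g\log g$. The additive $144g$ is then absorbed using $\log g \geq \log 2$, which gives $144g \leq (144/\log 2)\,g\log g < 2^{8} g\log g$. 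Summing yields the bound $(2^{39} + 2^{8}) g\log g < 2^{40} g\log g$, as required. The only real obstacle is this final arithmetic check, and in particular isolating the few small values of $g$ where the slack between $144g-162$ and $g$ is most pronounced; the structural content of the theorem is already furnished by Theorem~\ref{Thm: Colouring Upper Bound} and Lemma~\ref{Lemma: Existence of Full graph}.
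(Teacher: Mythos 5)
Your proposal is correct and follows essentially the same route as the paper: invoke Theorem~\ref{Thm: Colouring Upper Bound} to reduce the problem to bounding $|V(H_g)|$, apply Lemma~\ref{Lemma: Existence of Full graph} with $k = 144g-162$ and $d = 10$ to get $N \leq \lceil 8^{10}\log(144g-162)\rceil$, and then verify $(144g-162)\lceil 8^{10}\log(144g-162)\rceil \leq 2^{40}g\log g$. The paper simply asserts this last numerical inequality for all $g \geq 2$; you spell out the arithmetic (splitting off the ceiling's additive $1$, checking that the ratio $(144g-162)\log(144g-162)/(g\log g)$ peaks below $2^9$ near $g=3$, and absorbing the linear term via $\log g \geq \log 2$), which is a more explicit but equivalent verification.
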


\begin{proof}
    Let $g\geq 2$ be an integer and let $G$ be an oriented graph of Euler genus at most $g$.
    Letting $k = 144g - 162$ and $d=10$ Lemma~\ref{Lemma: Existence of Full graph} implies there exists a 
    $$(144g - 162,10,\lceil 8^{10} \log(144g - 162) \rceil)\text{-full}$$
    graph. 
    Letting $K^{(g)}$ be a $(144g -162,10,N)$-full graph, where $N$ is chosen to be as small as possible, we see that $N \leq \lceil 8^{10} \log(144g - 162) \rceil$.

    Given $K^{(g)}$ we can define the graph $H_g$ as in Section~4.
    Note that $H_g$ has the same order as $K^{(g)}$.
    Then,
    $$
    |V(H_g)|\leq (144g - 162)\lceil 8^{10} \log(144g - 162) \rceil \leq 2^{40} g \log(g)
    $$
    for all $g \geq 2$.
    By Theorem~\ref{Thm: Colouring Upper Bound} $G$ has an oriented homomorphism to a oriented graph $H$ which contains $H_g$ as a spanning subgraph. Thus, 
    $$
    \chio(G) \leq |V(H)| = |V(H_g)| \leq 2^{40} g \log(g).
    $$
    As $G$ was chosen without loss of generality, this completes the proof.
\end{proof}

\section{Future Work}

Despite giving upper and lower bounds for $\chio(g)$ that are within a polylogarithmic factor of each other, the exact value of $\chio(g)$ for small $g$ remains open.
Even the maximum oriented chromatic number of planar graphs, $\chio(0)$, is poorly understood.
Any progress on the value of $\chio(g)$ for small $g$ is of interest.

One consequence of our proof that $\chio(g) \geq \Omega(\frac{g}{\log(g)})$ is that there exists an oriented clique with Euler genus at most $g$ whose order is asymptotically close to our upper bound for $\chio(g)$.
Is this a coincidence or part of a larger pattern for minor closed families?
We conjecture the latter.

\begin{conjecture}
    For a minor closed family of graphs $\mathcal{G}$ let $\omega_o(\mathcal{G})$ be order of a largest oriented clique in $\mathcal{G}$.
    There exists a function $f:\mathbb{N} \rightarrow \mathbb{N}$ such that $f(n) = n^{1+o(1)}$ and for all minor closed families of graphs $\mathcal{G}$, 
    $$
        \max_{G \in \mathcal{G}} \chio(G) \leq f(\omega_o(\mathcal{G})).
    $$
\end{conjecture}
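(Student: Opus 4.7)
My plan is to adapt the framework of Sections~4 and~5 to an arbitrary proper minor-closed family $\mathcal{G}$ by generalising the two structural ingredients that drive the bounded-genus proof: a good density bound, and a reducibility of low-degree configurations. Write $\omega:=\omega_o(\mathcal{G})$, and let $t$ be the least integer such that $K_t$ is not a minor of any $G\in\mathcal{G}$. Minor-closure forces $K_{t-1}\in\mathcal{G}$, and any tournament on $t-1$ vertices is an oriented clique, so $\omega\geq t-1$. Thomason's extremal theorem for $K_t$-minors then yields that every $G\in\mathcal{G}$ is $O(t\sqrt{\log t})=O(\omega\sqrt{\log\omega})$-degenerate; this plays the role of Lemma~\ref{Lemma: Large Min Degree} in the new setting, with Euler's formula replaced by Thomason's inequality.

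Mirroring Section~5, I would then use Lemma~\ref{Lemma: Existence of Full graph} to construct a $(k,d,N)$-full target $K^{(\mathcal{G})}$ whose $k$ partite classes are chosen so that $k$ slightly exceeds a forthcoming upper bound on $\max_{G\in\mathcal{G}}\chis(G)$, and whose $d$ is a mildly $\omega$-dependent parameter controlling reducible low-degree neighbourhoods. Since $|V(K^{(\mathcal{G})})|\leq k\lceil 8^{d}\log k\rceil$, provided one can take $k=\omega^{1+o(1)}$ and $d=O(\log\omega)$, the target has order $\omega^{1+o(1)}$ and the conjecture follows from the existence of an oriented homomorphism $G\to K^{(\mathcal{G})}$ exactly as in Theorem~\ref{Thm: Upper Bound}. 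The discharging argument of Theorem~\ref{Thm: Colouring Upper Bound} should port across with largely cosmetic changes: the threshold integers ``$4$'', ``$6$'', ``$12$'', and ``$12g-12$'' appearing in Lemmas~\ref{Lemma: <=3-vertex}, \ref{Lemma: ajacent low degree vertices}, and~\ref{Lemma: smallest counterexample large max degree} become suitable functions of $\omega$ chosen so that the charge accounting still closes.

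The main obstacle is showing $\max_{G\in\mathcal{G}}\chis(G)\leq\omega^{1+o(1)}$. In the genus setting Lemma~\ref{Lemma: 2-Diapth Max Degree & Degen Upper Bound} controls $\chis$ via $\Delta$ and the degeneracy, and Lemma~\ref{Lemma: Minus E(U) 2-dipath} absorbs high-degree vertices into a small extra palette; both steps exploit the $O(\sqrt{g})$-degeneracy of bounded-genus graphs critically. For a general minor-closed $\mathcal{G}$ the available degeneracy bound is only $O(\omega\sqrt{\log\omega})$, so a naive application yields $\chis=O(\omega^{2}\sqrt{\log\omega})$, which violates the target. Closing this quadratic gap is the hard part. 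I expect it will require either (a) a structural decomposition via the Robertson--Seymour graph minor structure theorem, writing each $G\in\mathcal{G}$ as a bounded clique-sum of near-embeddings on surfaces of bounded genus and then applying Theorem~\ref{Thm: Upper Bound} to each near-embedded piece, or (b) a $2$-dipath colouring argument that exploits the sparsity of $K_t$-minor-free graphs beyond simple degeneracy, for instance via the Alon--Seymour--Thomas separator theorem. Option (a) looks more tractable, but it demands tight control of how clique-sums compound the accumulated palette across the decomposition tree, and verifying that these accumulated overheads remain $\omega^{o(1)}$ is the technical heart of the plan.
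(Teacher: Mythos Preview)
The statement you address is posed in the paper as an open \emph{conjecture} in the Future Work section; the paper gives no proof and no proof sketch for it, so there is nothing to compare your attempt against. What you have written is a research programme for an open problem, and you do correctly isolate the central obstruction, namely forcing $\chis\leq\omega^{1+o(1)}$ across an arbitrary proper minor-closed class.

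Two further gaps in the plan deserve mention. First, the claim that taking $d=O(\log\omega)$ keeps the target at order $\omega^{1+o(1)}$ is false as written: if $d=c\log\omega$ for any fixed $c>0$ then $8^{d}=\omega^{\Theta(1)}$, so you really need $d=o(\log\omega)$. More substantively, nothing in the outline explains why $d$ can be held this small. In the paper's machinery $d$ must dominate the back-degree in the degeneracy sweep (Lemma~\ref{Lemma: Large Min Degree} delivers back-degree $6$ for genus, which is why $d=10$ suffices), whereas for a $K_t$-minor-free class the degeneracy is $\Theta(t\sqrt{\log t})$ and there is no analogue of Lemma~\ref{Lemma: Large Min Degree} to beat it down further. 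Second, the discharging does not port ``with largely cosmetic changes.'' The genus argument hinges on the inequality $\sum_v(\deg v-6)\leq 6g-12$, whose right-hand side is independent of $|V|$; it is precisely this $n$-free slack that caps $\Delta$ after discharging. Replacing Euler's formula by Thomason's density bound yields $\sum_v(\deg v - D)\leq 0$ with $D=\Theta(t\sqrt{\log t})$ and no slack term, so the most discharging can deliver is $\Delta=O(D)$; fed into Lemma~\ref{Lemma: 2-Diapth Max Degree & Degen Upper Bound} with degeneracy also $\Theta(D)$, this simply reproduces the quadratic $\chis$ estimate you already flagged rather than providing an independent route around it. Your option~(a) via the graph-minor structure theorem would have to resolve both the $\chis$ and the $d$ obstacles simultaneously, and that is where the real work lies.
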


\bibliographystyle{plain}
\bibliography{Bib}

\begin{thebibliography}{10}

\bibitem{albertson1978acyclic}
Michael~O Albertson and David~M Berman.
\newblock An acyclic analogue to heawood's theorem.
\newblock {\em Glasgow Mathematical Journal}, 19(2):163--166, 1978.

\bibitem{alon1996acyclic}
Noga Alon, Bojan Mohar, and Daniel~P Sanders.
\newblock On acyclic colorings of graphs on surfaces.
\newblock {\em Israel Journal of Mathematics}, 94(1):273--283, 1996.

\bibitem{aravind2013forbidden}
NR~Aravind and CR~Subramanian.
\newblock Forbidden subgraph colorings and the oriented chromatic number.
\newblock {\em European Journal of Combinatorics}, 34(3):620--631, 2013.

\bibitem{borodin1979acyclic}
Oleg~V Borodin.
\newblock On acyclic colorings of planar graphs.
\newblock {\em Discrete Mathematics}, 25(3):211--236, 1979.

\bibitem{bradshaw2023injective}
Peter Bradshaw, Alexander Clow, and Jingwei Xu.
\newblock Injective edge colorings of degenerate graphs and the oriented chromatic number.
\newblock {\em arXiv preprint arXiv:2308.15654}, 2023.

\bibitem{clow2023oriented}
Alexander Clow and Ladislav Stacho.
\newblock Oriented colouring graphs of bounded degree and degeneracy.
\newblock {\em arXiv preprint arXiv:2304.09320}, 2023.

\bibitem{courcelle1994monadic}
Bruno Courcelle.
\newblock The monadic second order logic of graphs vi: On several representations of graphs by relational structures.
\newblock {\em Discrete Applied Mathematics}, 54(2-3):117--149, 1994.

\bibitem{hoorfar2008inequalities}
Abdolhossein Hoorfar and Mehdi Hassani.
\newblock Inequalities on the \uppercase{L}ambert \uppercase{W} function and hyperpower function.
\newblock {\em J. Inequal. Pure and Appl. Math}, 9(2):5--9, 2008.

\bibitem{kostochka1997minimum}
Alexandr~V Kostochka, Tomasz Luczak, G{\'a}bor Simonyi, and Eric Sopena.
\newblock On the minimum number of edges giving maximum oriented chromatic number.
\newblock In {\em Contemporary trends in discrete mathematics}, pages 179--182. Citeseer, 1997.

\bibitem{kostochka1997acyclic}
Alexandr~V Kostochka, {\'E}ric Sopena, and Xuding Zhu.
\newblock Acyclic and oriented chromatic numbers of graphs.
\newblock {\em Journal of Graph Theory}, 24(4):331--340, 1997.

\bibitem{marshall2015oriented}
TH~Marshall.
\newblock On oriented graphs with certain extension properties.
\newblock {\em Ars Combinatoria}, 120:223--236, 2015.

\bibitem{min2006dipath}
Chen Min and Wang Weifan.
\newblock The 2-dipath chromatic number of halin graphs.
\newblock {\em Information processing letters}, 99(2):47--53, 2006.

\bibitem{mohar2001graphs}
Bojan Mohar and Carsten Thomassen.
\newblock {\em Graphs on surfaces}.
\newblock Johns Hopkins University Press, 2001.

\bibitem{raspaud1994good}
Andr{\'e} Raspaud and Eric Sopena.
\newblock Good and semi-strong colorings of oriented planar graphs.
\newblock {\em Information Processing Letters}, 51(4):171--174, 1994.

\bibitem{sopena2016homomorphisms}
{\'E}ric Sopena.
\newblock Homomorphisms and colourings of oriented graphs: An updated survey.
\newblock {\em Discrete Mathematics}, 339(7):1993--2005, 2016.

\bibitem{West1996}
Douglas~B. West.
\newblock {\em Introduction to graph theory}.
\newblock Prentice Hall, Inc., Upper Saddle River, NJ, 1996.

\end{thebibliography}

\end{document}